\documentclass[a4paper,reqno]{amsart}

\usepackage{amsmath}
\usepackage{amsthm}
\usepackage{amssymb}
\usepackage{eucal}
\usepackage{tikz-cd}
\usepackage[colorlinks]{hyperref}

\newcommand{\Spec}{\operatorname{Spec}}
\newcommand{\Max}{\operatorname{Max}}
\newcommand{\Irr}{\operatorname{Irr}}
\newcommand{\Inj}{\operatorname{Inj}}
\newcommand{\grade}{\operatorname{grade}}
\newcommand{\fdim}{\operatorname{fdim}}
\newcommand{\idim}{\operatorname{idim}}
\newcommand{\occ}{\operatorname{occ}}
\newcommand{\soc}{\operatorname{soc}}
\newcommand{\rad}{\operatorname{rad}}
\newcommand{\Tor}{\operatorname{Tor}}
\newcommand{\Ext}{\operatorname{Ext}}
\newcommand{\Hom}{\operatorname{Hom}}

\newcommand{\lto}{\longrightarrow}

\newtheorem{theorem}{Theorem}[section]
\newtheorem{lemma}[theorem]{Lemma}
\newtheorem{proposition}[theorem]{Proposition}
\newtheorem{corollary}[theorem]{Corollary}
\theoremstyle{definition}

\newtheorem{example}[theorem]{Example}
\theoremstyle{remark}
\newtheorem{remark}[theorem]{Remark}

\numberwithin{equation}{section}

\title[Symmetry of self-injective dimensions]
      {Symmetry of self-injective dimensions under the Auslander condition }
\author{Dawei Shen}
\address{School of Mathematics and Statistics, 
    Henan University, 
    Kaifeng, 
    Henan 475004, 
    P. R. China}
\email{sdw12345@mail.ustc.edu.cn}
\date{\today}
\subjclass[2020]{Primary 16E10; Secondary 16D40, 16P40}
\keywords{Auslander condition, self-injective dimension, $n$-Gorenstein ring}

\begin{document}

\begin{abstract}
Let $A$ be a module-finite algebra over a commutative Noetherian ring. 
We give a local criterion for the $n$-Gorenstein condition. 
For such algebras satisfying the Auslander condition, 
we prove that finite self-injective dimension is symmetric. 
Under a one-sided local finiteness hypothesis, 
we further characterize the Auslander condition 
by a fiberwise form of Iyama's grade bijection and 
obtain a  criterion for the Auslander--Gorenstein property.
\end{abstract}

\maketitle

\section{Introduction}
Let $A$ be a two-sided Noetherian ring and $n\geq 0$. 
Recall that $A$ is $n$-Gorenstein if, in a minimal injective coresolution
\[
0\lto A\lto I^0(A)\lto I^1(A)\lto\cdots
\]
of $A$, 
one has $\fdim_A I^\ell(A)\leq \ell$ for every $0\leq \ell<n$.
The ring $A$ is said to satisfy the Auslander condition
if it is $n$-Gorenstein for every $n$. 
Such a ring is called Auslander--Gorenstein if it satisfies the Auslander condition and has finite self-injective dimension on both sides.

The Auslander condition is a homological regularity condition 
on the minimal injective coresolution of the ring.  
It originated in the representation theory of Artin algebras and 
is closely related to the behavior of syzygies, grades, 
and self-injective dimensions.  
Beyond the Artinian setting, however, 
injective modules are typically much larger and the usual finite-dimensional 
dualities are no longer available. 
Algebras that are module-finite over commutative Noetherian rings 
form a natural class in which one can still combine noncommutative homological 
methods with commutative localization: central localization reduces questions 
to the semilocal algebras $A_{\mathfrak p}$, 
while Kanda's description of indecomposable injectives relates 
the global injective theory of $A$ to simple modules over these 
localizations \cite{Kanda}. 
This local structure is the basic mechanism used throughout the paper.

Recent work of Kl\'asz, Kleinau, Marczinzik and Marquardt gives a new
characterization of Auslander--Gorenstein finite-dimensional algebras
\cite{KKMM}.  More precisely, they show that a finite-dimensional algebra is
Auslander--Gorenstein precisely when it has a well-defined Auslander--Reiten
bijection; equivalently, in the Iwanaga--Gorenstein setting, the
Auslander--Gorenstein property is detected by a grade
bijection on simple modules.  This reinforces a general theme already
present in Iyama's work: homological conditions on injective modules are
closely reflected by grade correspondences among simple modules.  
The present paper develops this theme for algebras module-finite 
over commutative Noetherian rings, 
where finite-dimensional
duality is no longer available and the relevant simple modules occur over the
central localizations $A_{\mathfrak p}$.

Our first main result is a local criterion for the $n$-Gorenstein
condition for algebras module-finite over commutative Noetherian rings. 
Two ingredients lead to our local criterion. 
For an arbitrary two-sided Noetherian ring, 
the author characterized the $n$-Gorenstein condition 
in terms of the occurrence degrees and flat dimensions of indecomposable injective 
modules  \cite{Shen}. 
On the other hand, 
Kanda's description of indecomposable injective modules and Bass numbers 
in this module-finite setting  
identifies these occurrence degrees with the grades of simple modules 
over the central localizations \cite{Kanda}. 
Combining these results gives the following criterion.

\begin{theorem}\label{thm:intro-pointwise}
Let $A$ be a module-finite algebra over a commutative Noetherian ring $R$.
For $n\geq 0$, the following are equivalent.
\begin{enumerate}
\item $A$ is $n$-Gorenstein.
\item For every $\mathfrak p\in\Spec R$ and every simple $A_{\mathfrak p}$-module $S$,
\[
\grade_{A_{\mathfrak p}}S
\geq
\min\{n,\fdim_{A_{\mathfrak p}}I_{A_{\mathfrak p}}(S)\}.
\]
\item For every such pair $(\mathfrak p,S)$ with $\grade_{A_{\mathfrak p}}S<n$,
\[
\fdim_{A_{\mathfrak p}}I_{A_{\mathfrak p}}(S)
=
\grade_{A_{\mathfrak p}}S.
\]
\end{enumerate}
\end{theorem}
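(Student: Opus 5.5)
The plan is to combine the author's earlier characterization of the $n$-Gorenstein condition \cite{Shen} with Kanda's description of indecomposable injectives and Bass numbers \cite{Kanda}, as indicated in the introduction. First I would recall the result from \cite{Shen}. For a two-sided Noetherian ring $A$ and an indecomposable injective right $A$-module $E$, let $\occ(E)$ be the least $\ell$ such that $E$ is a direct summand of $I^\ell(A)$, with $\occ(E)=\infty$ if there is no such $\ell$. The result says that $A$ is $n$-Gorenstein if and only if $\occ(E)\geq\min\{n,\fdim_A E\}$ for every indecomposable injective $E$. I would take this as the global form of condition (2).

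Second, I would translate this into local data. By Kanda, every indecomposable injective $A$-module has the form $E=I_{A_{\mathfrak p}}(S)$, the injective hull of a simple $A_{\mathfrak p}$-module $S$ for some $\mathfrak p\in\Spec R$, regarded as an $A$-module, and $(\mathfrak p,S)$ is determined up to isomorphism by $E$. Kanda's Bass number computation says that the multiplicity of $E$ in $I^\ell(A)$ is nonzero exactly when $\Ext^\ell_{A_{\mathfrak p}}(S,A_{\mathfrak p})\neq 0$. Hence $\occ(E)=\grade_{A_{\mathfrak p}}S$. One must check the sidedness: the Ext needs to be taken with the correct side of $S$ and $A_{\mathfrak p}$.

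I would then prove $\fdim_A E=\fdim_{A_{\mathfrak p}}E$. Since $E$ is an $A_{\mathfrak p}$-module and $A\to A_{\mathfrak p}$ is a flat central localization, we have $\Tor^A_i(E,M)\cong\Tor^{A_{\mathfrak p}}_i(E,M_{\mathfrak p})$ for left $A$-modules $M$. Every left $A_{\mathfrak p}$-module is the localization of itself viewed as an $A$-module. This gives the equality of flat dimensions. With these identifications, (1)$\Leftrightarrow$(2) follows directly.

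For (2)$\Leftrightarrow$(3), I would invoke the general inequality $\fdim_{A_{\mathfrak p}}I(S)\geq\grade_{A_{\mathfrak p}}S$. In global form this reads $\fdim E\geq\occ(E)$, and I expect it was established in \cite{Shen}. A direct argument would run as follows: if $\fdim E=d<\infty$, then the summand of $I^\ell(A)$ with $\ell=\occ(E)$ forces $\ell\leq d$, by the standard fact that a flat-dimension-$d$ indecomposable injective cannot first appear beyond degree $d$. Given this inequality, suppose $g=\grade S<n$. Then (2) gives $g\geq\min\{n,\fdim\}$, which forces $\fdim\leq g$, and combined with the inequality this gives equality. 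Conversely, assume (3). If $g\geq n$ then (2) holds trivially, and otherwise $g=\fdim\geq\min\{n,\fdim\}$.

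The main obstacle is the precise identification $\occ(E)=\grade_{A_{\mathfrak p}}S$ with the correct sides, and securing the inequality $\fdim\geq\occ$. For the latter, I would first look in \cite{Shen}. Failing that, I would derive it by localizing at $\mathfrak p$ and using a Matlis-type duality over the semilocal ring $A_{\mathfrak p}$ to compare $\Tor$ against $E$ with $\Ext$ into $A_{\mathfrak p}$.
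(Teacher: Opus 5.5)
Your proposal is correct and matches the paper's proof: (1)$\iff$(2) combines the criterion of \cite{Shen} (Lemma~\ref{lem:gor-n-local}) with Kanda's dictionary (Proposition~\ref{prop:dictionary}). That dictionary provides exactly your identifications $\occ_A I_P=\grade_{A_{\mathfrak p}}S$ and $\fdim_A I_P=\fdim_{A_{\mathfrak p}}I_{A_{\mathfrak p}}(S)$, and (2)$\iff$(3) uses the inequality $\grade_AS=\occ_AI_A(S)\leq\fdim_AI_A(S)$, which is Lemma~\ref{lem:grade-fd} from \cite{Shen}, so your fallback ``direct argument'' is not needed (as written it only restates the claim).
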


Our second main result concerns the symmetry of the finiteness of
self-injective dimensions. For Artin algebras satisfying the Auslander
condition, Auslander and Reiten proved that finiteness of 
the self-injective dimension on one side implies
finiteness on the other side \cite{AR}.
Huang extended this result to two-sided Artinian rings
\cite{Huang07}. For a general two-sided Noetherian ring,
Huang asked whether the same implication holds \cite{Huang23}. 
Zaks proved that for a two-sided Noetherian ring the two self-injective
dimensions are equal whenever both are finite \cite[Lemma~A]{Zaks}.
Then the issue is the symmetry of finiteness. We prove this symmetry for
module-finite algebras over commutative Noetherian rings.
In contrast with the finite-dimensional characterization in \cite{KKMM},
our argument does not use a global vector-space duality or projective covers
of arbitrary injective modules.  Instead, it combines central localization,
Iwanaga's formula for self-injective dimension, and Iyama's grade duality on
the finitely many simple modules of each semilocal algebra
$A_{\mathfrak p}$.

\begin{theorem}\label{thm:intro-main}
Let $A$ be a module-finite algebra over a commutative Noetherian ring $R$. 
Assume that $A$ satisfies the Auslander condition. 
Then $\idim_AA<\infty$ if and only if $\idim_{A^{\mathrm{op}}}A<\infty$. 
\end{theorem}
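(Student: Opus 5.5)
By symmetry of the hypotheses it suffices to assume $d=\idim_AA<\infty$ and to bound $\idim_{A^{\mathrm{op}}}A$. The Auslander condition is left--right symmetric for two-sided Noetherian rings (Fossum--Griffith--Reiten), so $A^{\mathrm{op}}$ also satisfies it. The plan is to reduce to the semilocal algebras $A_{\mathfrak p}$ and to show there that the injective dimension on the other side is at most $d$, uniformly in $\mathfrak p$. Two localization facts are needed. Injective dimension of a finitely generated module over a module-finite algebra is detected by Bass numbers, hence $\idim_AA=\sup_{\mathfrak p}\idim_{A_{\mathfrak p}}A_{\mathfrak p}$, and similarly on the right. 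Moreover each $A_{\mathfrak p}$ still satisfies the Auslander condition, which follows from Theorem~\ref{thm:intro-pointwise} applied for every $n$, since condition (2) is local. So we may fix $\mathfrak p$ and write $\Lambda=A_{\mathfrak p}$, a semilocal Noetherian algebra with finitely many simple left modules and finitely many simple right modules. We have $\idim_\Lambda\Lambda\le d$ and $\Lambda$ is Auslander, and we want $\idim_{\Lambda^{\mathrm{op}}}\Lambda\le d$.

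Iwanaga's formula for a semilocal Noetherian ring gives $\idim_{\Lambda^{\mathrm{op}}}\Lambda=\sup\{\ell:\Ext^\ell_{\Lambda^{\mathrm{op}}}(T,\Lambda)\neq0$ for some simple right module $T\}$, provided this is finite, and similarly on the left. The tool linking the two sides is Iyama's grade duality for Auslander rings. For a simple module $S$ of grade $g$, the module $\Ext^g(S,\Lambda)$ is a nonzero module on the other side whose grade is exactly $g$ (Auslander condition). In the semilocal situation Iyama's argument gives a bijection $S\mapsto S^*$ between simple left and simple right $\Lambda$-modules of finite grade, with $\grade S=\grade S^*$, where $S^*$ is a simple top or socle piece of $\Ext^g(S,\Lambda)$. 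On the left side, $\idim\Lambda\le d$ forces $\Ext^{>d}(S,\Lambda)=0$ for all simple left modules $S$. Combined with the Auslander condition and the finiteness of the set of simples, this says that every simple left module has grade at most $d$.

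The main step, and the obstacle, is the passage from ``every simple left module has grade $\le d$'' to ``every simple right module $T$ has $\Ext^\ell(T,\Lambda)=0$ for $\ell>d$''. I would argue it by counting. The bijection matches simples of finite grade, all left simples have finite grade, and the number of left simples equals the number of right simples, since both equal the number of simple modules of $\Lambda/\rad\Lambda$. Hence every simple right module $T$ has finite grade $\le d$. Then use Theorem~\ref{thm:intro-pointwise}(3) on the right side, or equivalently the fact that $\Ext^\ell(T,\Lambda)$ has grade $\ge\ell$. For $\ell>d$, a nonzero such module would have composition factors, up to finite-length issues handled via the socle and the Auslander condition, of grade $>d$, which contradicts the bound just obtained. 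This forces $\Ext^{\ell}(T,\Lambda)=0$ for $\ell>d$, so $\idim_{\Lambda^{\mathrm{op}}}\Lambda\le d$. Taking the supremum over $\mathfrak p$ then gives $\idim_{A^{\mathrm{op}}}A\le d<\infty$, and by Zaks the two dimensions are in fact equal. The delicate points are that the modules $\Ext^\ell(T,\Lambda)$ over $\Lambda$ need not have finite length, so the grade argument must go through an associated simple submodule supported at the maximal ideal rather than through a composition series, and that Iyama's bijection must be verified in the semilocal but non-Artinian setting. I would first try to establish this by reducing to $\widehat{\Lambda}$, the completion at $\mathfrak p$, where the arguments from the Artinian case apply more readily.
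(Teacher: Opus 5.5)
There is a genuine gap in the claim that every simple left $\Lambda$-module has grade at most $d$, and it is the real crux. From $\idim_\Lambda\Lambda\le d$ you only get $\Ext^{>d}_\Lambda(S,\Lambda)=0$. That bounds $\grade S$ by $d$ \emph{provided} $\grade S<\infty$. Nothing you say excludes a simple left module with $\Ext^i_\Lambda(S,\Lambda)=0$ for all $i$, i.e.\ one whose injective envelope never occurs in the minimal injective coresolution of $\Lambda$.

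The Auslander condition and finiteness of the simples do not close this by themselves. Iyama's grade-preserving bijection only matches simples of grade $<n$ on the two sides. From left-side information alone it tells you that right simples of finite grade have grade $\le d$, but not that all simples on either side have finite grade. Your counting step transfers finiteness from left to right, so it presupposes the unproved claim.

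In your arrangement the counting in fact does no work. $\Ext^\ell_{\Lambda^{\mathrm{op}}}(T,\Lambda)$ is a \emph{left} module, so the contradiction in your last step is with the (unproved) left bound, not with the right bound you obtained by counting. That last step is also easier than you fear. $T$, and hence $\Ext^\ell_{\Lambda^{\mathrm{op}}}(T,\Lambda)$, is killed by $\mathfrak pR_{\mathfrak p}$, so the latter has finite length and no completion is needed.

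The missing idea is that $\idim_\Lambda\Lambda\le d$ gives a priori information on the \emph{right} side:
\begin{enumerate}
\item By Iwanaga's formula (Lemma~\ref{lem:iwanaga}), every injective right module has flat dimension $\le d$.
\item The Auslander condition for $\Lambda^{\mathrm{op}}$ (Corollary~\ref{cor:grade=fd}) then gives $\grade T=\fdim I_{\Lambda^{\mathrm{op}}}(T)\le d$ for every simple right module $T$.
\item Only now does the counting argument apply, in the direction right to left (Corollary~\ref{cor:counting} with $n=d+1$). It gives $\grade S\le d$ for every simple left $A_{\mathfrak p}$-module.
\end{enumerate}

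The paper then finishes with Corollary~\ref{cor:grade=fd} again: $\fdim_AI_P=\grade_{A_{\mathfrak p}}S\le d$ for every indecomposable injective. It then uses the decomposition of injectives into indecomposables and Iwanaga's formula for $A^{\mathrm{op}}$.

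Once the left bound is available, your ending would also work: the Auslander condition plus detection of $\idim_{\Lambda^{\mathrm{op}}}\Lambda$ by Ext from simple modules. But that detection statement is a Bass-type theorem for module-finite algebras over a local ring, proved by Bass's shifting argument along chains of primes. It is not Iwanaga's formula, and it needs its own justification.
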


The argument for the second main result has one feature specific to the
module-finite situation over a commutative base ring.  After localizing $A$ at
a prime $\mathfrak p$ of $R$, the ring $A_{\mathfrak p}$ is semilocal.
Iyama's grade duality on simple modules can then be applied degree by degree,
and a finite counting argument gives the required uniform bound \cite{Iyama}.  
This is the step for which there is no evident analog 
for an arbitrary two-sided Noetherian ring.

The final section returns to Iyama's grade duality.  Motivated in part by the
grade-bijection characterization of  Auslander--Gorenstein Artin
algebras in \cite{KKMM}, we consider a fiberwise form of Iyama's grade
bijection for  module-finite algebras over commutative Noetherian rings.  
The last section provides a local counterpart to the principle that
simple-module bijections detect the Auslander--Gorenstein property.
We say that $A$ has the \emph{fiberwise Iyama grade bijection} if, for every
$\mathfrak p\in\Spec R$, all simple $A_{\mathfrak p}$-modules have finite
grade and the assignment
\[
S\longmapsto
\soc_{A_{\mathfrak p}^{\mathrm{op}}}
\Ext_{A_{\mathfrak p}}^{\grade_{A_{\mathfrak p}}S}
(S,A_{\mathfrak p})
\]
is a well-defined bijection from $\Irr(A_{\mathfrak p})$
to $\Irr(A_{\mathfrak p}^{\mathrm{op}})$. 

Combining the characterization in
Section~4 with the self-injective-dimension symmetry theorem gives our third
main result.

\begin{theorem}\label{thm:intro-third}
Let $A$ be a module-finite algebra over a commutative Noetherian ring.
Assume that $\idim_{A^{\mathrm{op}}}A<\infty$ and that $A$ has the fiberwise Iyama grade bijection. Then $A$ is Auslander--Gorenstein.
\end{theorem}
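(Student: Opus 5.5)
The plan is to reduce Theorem~\ref{thm:intro-third} to Theorem~\ref{thm:intro-main} together with the Section~4 characterization, namely that, under the one-sided finiteness hypothesis $\idim_{A^{\mathrm{op}}}A<\infty$, the fiberwise Iyama grade bijection is equivalent to the Auslander condition. So the first step is to show that $A$ satisfies the Auslander condition. By Theorem~\ref{thm:intro-pointwise}, it suffices to check, for every $n$, every $\mathfrak p\in\Spec R$ and every simple $A_{\mathfrak p}$-module $S$ with $\grade_{A_{\mathfrak p}}S<n$, that $\fdim_{A_{\mathfrak p}}I_{A_{\mathfrak p}}(S)=\grade_{A_{\mathfrak p}}S$.

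To get this equality, fix $\mathfrak p$ and set $\Lambda=A_{\mathfrak p}$. This ring is semilocal, and $\idim_{\Lambda^{\mathrm{op}}}\Lambda<\infty$ because central localization is flat. Set $g=\grade_\Lambda S$, which is finite by hypothesis. Let $T=\soc\Ext^g_\Lambda(S,\Lambda)$ be the simple right module assigned to $S$ by the bijection.

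\emph{Lower bound.} I would first prove $\fdim I(S)\ge g$. Over a Noetherian ring, Matlis-type duality gives $\Tor^\Lambda_i(T',I(S))$ as the dual of $\Ext^i(S,\cdot)$-data, so a nonzero $\Ext^g(S,\Lambda)$ should force a nonzero $\Tor_g$ against a suitable finite-length right module. This uses Kanda's Bass-number description: $S$ occurs in $I^g(\Lambda)$ since $\Ext^g(S,\Lambda)\ne0$.

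\emph{Upper bound.} For $\fdim I(S)\le g$, I would use the bijection to identify $I(S)$ as a direct summand of $I^g(\Lambda)$. Then I would show, by induction on $g$, that the grade bijection forces the "strong grade" property $\grade \Ext^g(S,\Lambda)\ge g$ and its dual version. Iyama's theorem then yields the $n$-Gorenstein condition degree by degree, the finite count of simples making the induction work.

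Once the Auslander condition is established, Theorem~\ref{thm:intro-main} applied to $\idim_{A^{\mathrm{op}}}A<\infty$ gives $\idim_AA<\infty$. Then $A$ is Auslander--Gorenstein by definition.

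The main obstacle is the upper bound on $\fdim$, that is, the implication from the bijection to the Auslander condition. I would attack it by induction on $n$. Assume $A$ is $n$-Gorenstein. The simples with grade $n$ then inject into the simples $T$ with $\grade T=n$ on the opposite side, by Iyama's duality. Well-definedness together with bijectivity forces equal counts in each degree, because $\idim_{\Lambda^{\mathrm{op}}}\Lambda<\infty$ bounds all grades and lets one sum over degrees. This count should force $\fdim I(S)=n$ for those $S$, which completes the induction step through Theorem~\ref{thm:intro-pointwise}(3).
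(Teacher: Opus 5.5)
\textbf{Verdict: genuine gap.} Your outer structure matches the paper: first get the Auslander condition, then apply Theorem~\ref{thm:intro-main} to the finite right self-injective dimension. You also pass to $\idim_{A_{\mathfrak p}^{\mathrm{op}}}A_{\mathfrak p}<\infty$, as the paper does. However, the step you yourself call the main obstacle is not proved, and the mechanism you propose cannot prove it. That step is: the bijection forces $\fdim_{A_{\mathfrak p}}I_{A_{\mathfrak p}}(S)\le\grade_{A_{\mathfrak p}}S$.

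\emph{Iyama's theorem runs the wrong way for your purpose.} Lemma~\ref{lem:iyama} takes $n$-Gorenstein as input and outputs a bijection on $\Irr_{<n}$; it never produces the $n$-Gorenstein condition from a bijection. In your induction step, knowing $n$-Gorenstein gives Iyama duality only for simples of grade $<n$. So the claim that grade-$n$ simples inject into grade-$n$ right simples ``by Iyama's duality'' is circular: it needs $(n+1)$-Gorenstein, which is what you are proving. From $n$-Gorenstein you get at most $\grade F_{\mathfrak p}(S)\ge n$.

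\emph{The counting step has nothing to count with.} The hypothesis does not say $F_{\mathfrak p}$ preserves grade, so ``equal counts in each degree'' is not available. Even with matching counts, nothing turns a count of simple modules into a flat-dimension bound on the non-finitely-generated module $I_{A_{\mathfrak p}}(S)$.

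\emph{The other ingredients give nothing new.} Knowing that $I(S)$ is a summand of $I^g(\Lambda)$ is already Lemma~\ref{lem:grade-fd} and says nothing about its flat dimension. Your ``lower bound'' is also just Lemma~\ref{lem:grade-fd}. A strong-grade inequality for $\Ext^g_\Lambda(S,\Lambda)$ with $S$ simple is not the Auslander condition, and you do not indicate how to derive it.

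\emph{The missing idea is Tor detection.} With it the argument is direct, with no induction and no counting. Put $I=I_{A_{\mathfrak p}}(S)$.
\begin{enumerate}
\item Lemma~\ref{lem:iwanaga}, applied to $A_{\mathfrak p}^{\mathrm{op}}$, gives $d=\fdim_{A_{\mathfrak p}}I<\infty$. This is where the one-sided hypothesis really enters; you use it only to bound grades.
\item Since $I$ is $\mathfrak pR_{\mathfrak p}$-torsion, Theorem~\ref{thm:residue-tor-short} gives a simple $A_{\mathfrak p}^{\mathrm{op}}$-module $T$ with $\Tor_d^{A_{\mathfrak p}}(T,I)\neq0$.
\item Surjectivity of $F_{\mathfrak p}$ gives $T\cong\soc\Ext^{g}_{A_{\mathfrak p}}(S',A_{\mathfrak p})$ for some simple $S'$ with $g=\grade S'$.
\item Lemma~\ref{lem:tor-grade} then forces $d=g$ and $\Hom_{A_{\mathfrak p}}(S',I)\neq0$. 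It uses the Tor--Ext reversal $\Tor_i(C,X)\cong\Ext^{g-i}(S',X)$, where $C=\operatorname{coker}(P_{g-1}^*\to P_g^*)$.
\item By essentiality $S'\cong S$, so $\fdim I=\grade S$. Corollary~\ref{cor:grade=fd} then gives the Auslander condition.
\end{enumerate}
Only surjectivity of $F_{\mathfrak p}$ and the inclusion $F_{\mathfrak p}(S')\subseteq\Ext^{g}(S',A_{\mathfrak p})$ are used. Your plan has no substitute for this link between the top Tor degree of $I(S)$ and the grade of a simple module.
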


The paper is organized as follows. In Section~2, we establish a local criterion for the
$n$-Gorenstein condition over algebras that are module-finite over commutative
Noetherian rings. Section~3 is devoted to the symmetry of self-injective
dimensions, where we prove the main theorem and derive its consequences.
In Section~4, we study Tor detection and the corresponding fiberwise grade
bijection, which provide a local interpretation of the symmetry result and
complete the proof of the Auslander--Gorenstein criterion.

Throughout this paper, all modules are left modules unless otherwise stated. 

\section{A local criterion}

Let $A$ be a two-sided Noetherian ring.
For a finitely generated $A$-module $M$, set
\[
\grade_A M=\inf\{i\geq 0\mid \Ext_A^i(M,A)\neq 0\}.
\]
The infimum of the empty set is $\infty$. 
Fix a minimal injective coresolution
\[
0\lto A\lto I^0(A)\lto I^1(A)\lto\cdots.
\]
For an indecomposable injective $A$-module $I$, put
\[
\occ_AI=\inf\{i\geq 0\mid I\text{ is a direct summand of }I^i(A)\}.
\]
For a simple $A$-module $S$, denote by $I_A(S)$ the injective envelope of $S$.

We need the following lemma; see \cite[Lemmas 2.1 and 2.2]{Shen}.
\begin{lemma}\label{lem:grade-fd}
Let $S$ be a simple $A$-module. Then
\[
    \grade_AS=\occ_AI_A(S)\leq \fdim_AI_A(S).
\]
\end{lemma}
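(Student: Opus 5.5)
The plan is to prove the two parts of the statement separately: first the equality $\grade_AS=\occ_AI_A(S)$, then the inequality $\occ_AI_A(S)\leq\fdim_AI_A(S)$. Throughout, write $E=I_A(S)$ and fix the minimal injective coresolution $0\to A\to I^0(A)\to I^1(A)\to\cdots$.

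For the equality, apply $\Hom_A(S,-)$ to the coresolution. Minimality means that each differential $I^i(A)\to I^{i+1}(A)$ restricts to zero on socles, since its kernel is essential in $I^i(A)$. Hence every map $S\to I^i(A)$ composes to zero with the differential, so the complex $\Hom_A(S,I^\bullet(A))$ has zero differentials. This gives
\[
\Ext_A^i(S,A)\cong\Hom_A(S,I^i(A)).
\]
Next, $\Hom_A(S,I^i(A))\neq0$ exactly when $S$ embeds in $I^i(A)$. Since $A$ is Noetherian, $I^i(A)$ is a direct sum of indecomposable injectives, so $S$ embeds in $I^i(A)$ if and only if $E$ is a direct summand of $I^i(A)$. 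Therefore the first $i$ with $\Ext^i_A(S,A)\neq0$ is exactly the first $i$ with $E$ a summand of $I^i(A)$. That is, $\grade_AS=\occ_AE$, with both sides equal to $\infty$ simultaneously.

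For the inequality, set $m=\fdim_AE$; if $m=\infty$ there is nothing to prove. Here $\fdim_A$ is the flat dimension of $E$ viewed as a right $A$-module. I would show that $\Ext^i_A(S,A)\neq0$ for some $i\leq m$. Take a flat resolution $0\to F_m\to\cdots\to F_0\to E\to0$ by flat right modules. The intended tool is the duality
\[
\Tor_i^A(E,S)\neq0\ \text{ for some } i \ \Longrightarrow\ \ldots
\]
but a cleaner route is the following. Because $E$ is a summand of $I^k(A)$ for $k=\occ E$, and the cosyzygies $\Omega^{-j}A$ are built from the $I^j$, I would argue by contradiction. Suppose $\Ext^i_A(S,A)=0$ for all $i\leq m$. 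Then $\Hom_A(S,I^i)=0$ for $i\leq m$. Let $k>m$ be minimal with $E\mid I^k$. Then $\Ext^k_A(S,A)\neq0$, which I would relate to $\Tor$ via the isomorphism $\Tor^A_k(D,S)\cong D\Ext^k_A(S,A)$ for $D$ an injective cogenerator (valid since $S$ is finitely presented). The aim is to produce nonvanishing of $\Tor^A_k(E,-)$ on a module, which contradicts $k>m$.

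The main obstacle is this last step: converting "$E$ first occurs in degree $k$" into "$\fdim E\geq k$". I would first try the route suggested by the citation, namely Shen's Lemmas 2.1 and 2.2. There one uses that $\Tor^A_k(E,\Omega)$ detects the occurrence of $E$ through the natural map $E\otimes_A\Hom_A(-,A)\to\Hom_A(\Hom_A(E,-),-)$. Failing that, I would prove the inequality directly in the Artinian-localized setting by Matlis-type duality. Since the result is cited from \cite{Shen}, the proof in the paper can simply invoke it.
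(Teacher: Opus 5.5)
Your proof of the equality $\grade_AS=\occ_AI_A(S)$ is correct. Minimality puts $\soc I^i(A)$ inside the kernel of the differential, so $\Ext^i_A(S,A)\cong\Hom_A(S,I^i(A))$, and $S$ embeds in $I^i(A)$ exactly when $I_A(S)$ is a summand.

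The inequality $\occ_AI_A(S)\leq\fdim_AI_A(S)$, however, is not proved, and this is a genuine gap. None of the tools you list reaches it:
\begin{itemize}
\item The duality $\Tor^A_k(D,S)\cong D\Ext^k_A(S,A)$ you invoke, whatever its precise form, computes $\Tor$ against $S$, not against $E=I_A(S)$. So it says nothing about $\fdim_AE$.
\item Matlis duality and localization are unavailable, since here $A$ is an arbitrary two-sided Noetherian ring.
\item Your contradiction setup (``let $k>m$ be minimal with $E\mid I^k$'') presupposes that $E$ occurs at all. It therefore misses the case $\grade_AS=\infty$ with $\fdim_AE<\infty$, which must also be excluded.
\end{itemize}
Note also that $E$ is a left module. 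Its flat dimension is measured by $\Tor^A_i(-,E)$ on right modules, not by regarding $E$ as a right module.

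The missing idea is to dualize a projective resolution of $S$. This is the same device the paper uses in Lemma~\ref{lem:tor-grade}. Let $P_\bullet\to S$ be a resolution by finitely generated projective left modules, and put $P_j^*=\Hom_A(P_j,A)$. Fix an integer $1\leq n\leq\grade_AS$. Since $\Ext^i_A(S,A)=0$ for $i<n$, the sequence
\[
0\lto P_0^*\lto P_1^*\lto\cdots\lto P_n^*\lto C_n\lto0
\]
is a projective resolution of the right module $C_n=\operatorname{coker}(P_{n-1}^*\to P_n^*)$. The natural isomorphisms $P_j^*\otimes_AE\cong\Hom_A(P_j,E)$ and left exactness of $\Hom_A(-,E)$ then give
\[
\Tor_n^A(C_n,E)\cong\ker\bigl(\Hom_A(P_0,E)\to\Hom_A(P_1,E)\bigr)\cong\Hom_A(S,E)\neq0,
\]
so $\fdim_AE\geq n$. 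Take $n=\grade_AS$, or every $n$ when the grade is infinite; the case of grade $0$ is trivial. This gives the inequality, including $\fdim_AE=\infty$ whenever $\grade_AS=\infty$.
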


Let $n\geq 0$. Recall that $A$ is $n$-Gorenstein if the flat dimension of $I^\ell(A)$ 
is at most $\ell$ for every $0\leq \ell<n$.
The ring $A$ is said to satisfy the Auslander condition
if it is $n$-Gorenstein for every $n\geq 0$, and 
$A$ is Auslander--Gorenstein if it satisfies the Auslander
condition and has finite self-injective dimension on both sides.

The $n$-Gorenstein property is invariant under passage to the opposite ring; see \cite[Theorem 3.7]{FGR}.

\begin{lemma}\label{lem:gor-n-symmetry}
For $n\geq 0$, $A$ is $n$-Gorenstein if and only if $A^{\mathrm{op}}$ is $n$-Gorenstein. 
\end{lemma}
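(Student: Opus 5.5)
This is the left-right symmetry of the $n$-Gorenstein condition, which the paper cites from Fossum--Griffith--Reiten \cite[Theorem 3.7]{FGR}. The plan is to reprove it through Auslander's classical equivalent formulation in terms of Tor. Write $0\to A\to I^0\to I^1\to\cdots$ for the minimal injective coresolution of ${}_AA$. The first step is to prove, for a two-sided Noetherian ring $A$, that the following are equivalent:
\begin{enumerate}
\item $\fdim_A I^\ell\leq \ell$ for all $0\leq \ell<n$;
\item for every finitely generated right $A$-module $M$, every $1\leq k\leq n$, and every submodule $N\subseteq \Ext^k_{A^{\mathrm{op}}}(M,A)$, we have $\Ext^i_A(N,A)=0$ for $i<k$, i.e.\ $\grade_A N\geq k$.
\end{enumerate}
Condition (2), used for both $A$ and $A^{\mathrm{op}}$, is the bridge. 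The second step is then to show that (2) for right modules is equivalent to the analogous condition for left modules.

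For (1)$\Leftrightarrow$(2), I would use the natural isomorphism $\Tor_i^A(M,I)\cong \Hom_A(\Ext^i_{A^{\mathrm{op}}}(M,A),I)$, valid for finitely generated $M$ and injective $I$ over a Noetherian ring. Thus $\fdim I^\ell\leq\ell$ means that $\Hom_A(\Ext^{i}(M,A),I^\ell)=0$ for all $i>\ell$ and all finitely generated $M$.

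Next, $\grade_A N\geq k$ holds exactly when $\Hom_A(N,I^\ell)=0$ for all $\ell<k$. Indeed, $\Hom(N,I^\bullet)$ computes $\Ext^\bullet(N,A)$, and by minimality of the coresolution, vanishing in low degrees forces the Hom terms themselves to vanish; this is the standard essential-extension argument, and it is where Noetherianity enters. Since a submodule $N$ of $\Ext^k(M,A)$ maps nontrivially to $I^\ell$ whenever $\Ext^k(M,A)$ does (by injectivity of $I^\ell$), the two conditions match degree by degree.

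The symmetric statement, namely (2) for left modules being equivalent to (2) for right modules, is Auslander's theorem. I would prove it by induction on $n$, using the spectral sequence $\Ext^p_{A}(\Ext^q_{A^{\mathrm{op}}}(M,A),A)\Rightarrow M$, suitably interpreted via a projective resolution and the double dual. Equivalently, I would use the syzygy characterisation: (2) holds for right modules if and only if every $k$-th syzygy of a finitely generated left module is $k$-torsionfree, for all $k\leq n$. This characterisation visibly exchanges the roles of the two sides through transpose duality $\mathrm{Tr}$.

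The main obstacle is this last step: showing that the ``$k$-syzygy $\Rightarrow$ $k$-torsionfree'' property on one side is equivalent to the grade condition on submodules of Ext on the other side. I would follow Auslander--Bridger, expressing $k$-torsionfreeness through the vanishing of $\Ext^i(\mathrm{Tr}\,M,A)$ for $1\leq i\leq k$ and chasing long exact sequences. Given that the paper only cites this result, deferring to \cite[Theorem 3.7]{FGR} for this step would be acceptable.
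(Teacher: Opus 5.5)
\paragraph{Gap: the left--right exchange is never proved.} The paper's proof is only the citation \cite[Theorem 3.7]{FGR}, which states the symmetry in exactly the flat-dimension form of the lemma. Your proposal gives no independent argument for the exchange of sides, which is the one nontrivial point. There are two options. If you cite the same theorem for it, your first step is an unnecessary detour that only rephrases the hypothesis. If you use the syzygy characterisation instead, that characterisation is false as stated.

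Here is why. With Auslander--Bridger's definition, $1$-torsionfree means torsionless. Every first syzygy is a submodule of a free module, so the condition ``every $k$-th syzygy is $k$-torsionfree for $k\leq 1$'' holds over every Noetherian ring. By contrast, $1$-Gorenstein means that $I^0(A)$ is flat, and this can fail. Take the path algebra $A$ of $1\to 2\leftarrow 3$, with left modules being representations. Then $I^0(A)$ is a direct sum of copies of the injective envelope $E$ of the simple module at the sink, and $E$ is $3$-dimensional and not projective.

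So the syzygy condition is an off-by-one weakening of your (2). In Auslander--Bridger's theory it corresponds to $\grade\Ext^i\geq i-1$, not to grade $\geq i$ on all submodules. It therefore cannot transport the $n$-Gorenstein condition across sides. The spectral-sequence alternative is only named, not carried out.

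\paragraph{A false claim inside your first step.} The statement ``$\grade_AN\geq k$ if and only if $\Hom_A(N,I^\ell)=0$ for all $\ell<k$'' is false for an individual module. In the example above, $\Hom_A(E,A)=0$, so $\grade_AE\geq 1$, yet $E$ is a summand of $I^0(A)$.

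The essential-extension argument replaces $N$ by the preimage of the cosyzygy under a map $N\to I^\ell$, that is, by a submodule. What it actually proves is: $\Hom_A(N,I^\ell)=0$ for all $\ell<k$ if and only if every submodule of $N$ has grade at least $k$. Your use of injectivity is also reversed. Injectivity of $I^\ell$ lets a nonzero map from a submodule extend to $\Ext^k_{A^{\mathrm{op}}}(M,A)$. The implication you state goes the other way and is false (consider the zero submodule).

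This part can be repaired. Your condition (2) quantifies over all submodules, so you can apply the corrected statement to $N=\Ext^k_{A^{\mathrm{op}}}(M,A)$ itself. Combined with your Tor--Hom isomorphism, this gives (1)$\Leftrightarrow$(2). Even after the repair, however, the lemma still rests entirely on Auslander's symmetry theorem, which is the same citation the paper uses.
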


Recall the following criterion for $n$-Gorenstein rings; see \cite[Theorem 3.3]{Shen}.

\begin{lemma}\label{lem:gor-n-local}
For $n\geq 0$, $A$ is $n$-Gorenstein if and only if 
\[\occ_AI\geq \min\{n,\fdim_AI\}\]
for every indecomposable injective $A$-module $I$.
\end{lemma}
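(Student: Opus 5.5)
We would prove Lemma~\ref{lem:gor-n-local} directly from the definitions. Two standard facts about the left Noetherian ring $A$ do the work. First, by Matlis' theorem every injective $A$-module is a direct sum of indecomposable injective modules; in particular each $I^\ell(A)$ is such a sum. Second, flat dimension is additive over direct sums:
\[
\fdim_A\Bigl(\bigoplus_j I_j\Bigr)=\sup_j \fdim_A I_j ,
\]
because $\Tor$ commutes with arbitrary direct sums. With these facts, the $n$-Gorenstein condition on the terms $I^\ell(A)$ becomes a condition on their indecomposable summands. We also use that $\occ_A I=m$ means $I$ is a summand of $I^m(A)$ and of no earlier term. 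By Krull--Schmidt--Azumaya for indecomposable injectives, whether $I$ is a summand of $I^i(A)$ does not depend on the chosen decomposition.

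For the forward direction, assume $A$ is $n$-Gorenstein and let $I$ be indecomposable injective. If $\occ_A I\geq n$, the inequality $\occ_AI\geq\min\{n,\fdim_AI\}$ is automatic. If $m=\occ_A I<n$, then $I$ is a direct summand of $I^m(A)$. Hence
\[
\fdim_A I\leq \fdim_A I^m(A)\leq m=\occ_A I,
\]
so $\occ_AI\geq \fdim_AI\geq\min\{n,\fdim_AI\}$.

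For the converse, fix $0\leq\ell<n$ and decompose $I^\ell(A)=\bigoplus_j I_j$ with each $I_j$ indecomposable. Each $I_j$ satisfies $\occ_A I_j\leq \ell<n$. The hypothesis $\occ_AI_j\geq\min\{n,\fdim_AI_j\}$ then forces the minimum to be $\fdim_A I_j$, since the value $n$ would exceed $\occ_A I_j$. Thus $\fdim_AI_j\leq\occ_AI_j\leq \ell$ for every $j$. Taking the supremum over $j$ gives $\fdim_A I^\ell(A)\leq\ell$, so $A$ is $n$-Gorenstein.

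We expect no serious obstacle. The only points needing care are the two structural facts: the Matlis decomposition (which requires $A$ to be left Noetherian) and the additivity of flat dimension over possibly infinite direct sums. Both are standard, and we would cite them rather than reprove them.
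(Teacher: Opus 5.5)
Your argument is correct. The paper does not prove this lemma; it quotes \cite[Theorem 3.3]{Shen}. So your proposal is a different route in the sense that it replaces a citation with a self-contained derivation from the definitions the paper uses.

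Your derivation rests on three facts:
\begin{enumerate}
\item each $I^\ell(A)$ is a direct sum of indecomposable injectives (Matlis);
\item $\fdim_A\bigl(\bigoplus_j I_j\bigr)=\sup_j\fdim_A I_j$, because $\Tor$ commutes with direct sums;
\item flat dimension does not increase on passing to direct summands.
\end{enumerate}
Together these reduce the condition $\fdim_A I^\ell(A)\leq\ell$ for $\ell<n$ to a condition on indecomposable summands. Your case split on whether $\occ_A I<n$ is exactly right, and it covers $\occ_A I=\infty$ and the vacuous case $n=0$.

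The citation buys brevity. Your argument shows that the lemma is essentially a restatement of the definition once injectives decompose, and that it needs only left Noetherianity of $A$. The two-sided hypothesis in the paper is needed elsewhere, not here.

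One small point: the appeal to Krull--Schmidt--Azumaya is unnecessary. Being a direct summand of $I^i(A)$ refers to no chosen decomposition, and $I^i(A)$ is determined up to isomorphism by minimality. In the converse, all you use is that each $I_j$ is a summand of $I^\ell(A)$, which gives $\occ_A I_j\leq\ell$ directly from the definition.
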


From now on, 
let $R$ be a commutative Noetherian ring and let $A$ be a module-finite $R$-algebra.
We write $\Spec A$ for the set of two-sided prime ideals of $A$.

By \cite[Theorem 7.1]{Kanda}, there is a bijection between $\Spec A$ and the isomorphism classes of 
indecomposable injective $A$-modules, under which $P$ corresponds to $I_P$. 
More precisely, for each prime ideal $P$ of $A$, the injective envelope of $A/P$ is 
a finite direct sum of copies of $I_P$.
Conversely, every indecomposable injective $A$-module is isomorphic to $I_P$ for a unique $P\in\Spec A$.

We need the following lemma; see \cite[\S2.4]{KN}.

\begin{lemma}\label{lem:plocal}
Let $P\in\Spec A$ and $\mathfrak p=P\cap R$. 
Then multiplication by every $c\in R\setminus\mathfrak p$ is an automorphism of $I_P$. 
Consequently, $I_P$ is $\mathfrak p$-local.
\end{lemma}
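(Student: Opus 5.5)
The plan is to prove the two assertions separately, and to prove the first one directly: show that every $c\in R\setminus\mathfrak p$ acts injectively and surjectively on $I_P$. The second assertion, that $I_P$ is $\mathfrak p$-local, then follows formally. Since $c$ is central, $I_P$ is an $R$-module, and the natural map $I_P\to (I_P)_{\mathfrak p}=I_P\otimes_R R_{\mathfrak p}$ is an isomorphism exactly when every element of $R\setminus\mathfrak p$ acts bijectively. Such a map is then an isomorphism of $A$-modules, so $I_P$ carries a canonical $A_{\mathfrak p}$-module structure.

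\textbf{Injectivity.} Let $\ell_c$ denote multiplication by $c$ on $I_P$; it is an $A$-linear endomorphism because $c$ is central. Its kernel $K$ is a submodule of $I_P$. Assume $K\neq 0$. Since $I_P$ is an injective envelope of a module whose nonzero submodules all contain copies of submodules of $A/P$ (Kanda's description: $E(A/P)\cong I_P^{\oplus m}$), $I_P$ is an essential extension of some nonzero submodule $N$ embedded in $A/P$. Then $K\cap N\neq 0$, so some nonzero element $x\in A/P$ (or a nonzero left ideal of $A/P$) is killed by $c$. The central element $c$ acts on $A/P$ by multiplication. Because $P$ is prime and $c$ is central with $c\notin P$ (as $c\notin \mathfrak p=P\cap R$), the image $\bar c$ is a nonzero central element of the prime ring $A/P$. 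In a prime ring, a nonzero central element is a non-zero-divisor: if $\bar c\bar x=0$ then $\bar c A \bar x=A\bar c\bar x=0$, forcing $\bar x=0$. This contradiction gives $K=0$.

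\textbf{Surjectivity.} This is the main obstacle. Because $\ell_c$ is injective, $cI_P\cong I_P$ is an injective submodule, hence a direct summand: $I_P=cI_P\oplus C$. Since $I_P$ is indecomposable and $cI_P\neq 0$, we get $C=0$, so $\ell_c$ is surjective. I would check carefully that $cI_P$ is indeed an $A$-submodule isomorphic to $I_P$; centrality of $c$ ensures this. Here is where indecomposability (Kanda's bijection) is essential.

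Finally, combining injectivity and surjectivity gives that $\ell_c$ is an automorphism, and hence that $I_P\cong (I_P)_{\mathfrak p}$, which is the $\mathfrak p$-local statement.
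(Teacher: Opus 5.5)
Your proof is correct. The paper gives no argument of its own for this lemma; it defers to Kanda--Nakamura \cite[\S2.4]{KN}. So what you have written is a self-contained replacement for the citation. It is the classical Matlis argument for $E_R(R/\mathfrak p)$ carried over to the module-finite setting, with primeness of $A/P$ playing the role that the domain property of $R/\mathfrak p$ plays commutatively: the image of $c$ is a nonzero central element of the prime ring $A/P$, hence a non-zero-divisor. The citation buys brevity. Your version makes visible that the only inputs are primeness of $A/P$ and Kanda's description of $I_P$ as a direct summand of $E(A/P)$.

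Two small points. First, the essentiality step is phrased loosely. Say instead: realize $I_P$ as a direct summand of $E(A/P)$; then $N=I_P\cap(A/P)$ is nonzero and essential in $I_P$, because $A/P$ is essential in $E(A/P)$. Second, indecomposability is not actually needed for surjectivity. If a central $c$ acts injectively on any injective module $E$, then $cE\cong E$ is injective, so $E=cE\oplus C$ with $C$ an $A$-submodule. Then $cC\subseteq C\cap cE=0$, and injectivity of $c$ forces $C=0$. Hence the same argument shows that $E(A/P)$ itself is $\mathfrak p$-local, and $I_P$ inherits this property as a direct summand.
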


For $P\in\Spec A$, put $\mathfrak p=P\cap R$. 
Then $PA_{\mathfrak p}$ is a maximal two-sided ideal of $A_{\mathfrak p}$, 
and $A_{\mathfrak p}/PA_{\mathfrak p}$ is a simple Artinian ring.
There is, up to isomorphism, 
a unique simple $A_{\mathfrak p}$-module annihilated by $PA_{\mathfrak p}$; 
denote it by $S(P)$. Conversely, 
every simple $A_{\mathfrak p}$-module is isomorphic to $S(P)$ for a unique prime $P$ 
with $P\cap R=\mathfrak p$.

Denote by $\Irr(A_{\mathfrak p})$ 
the set of isomorphism classes of all simple $A_{\mathfrak p}$-modules.
We need the following.

\begin{proposition}\label{prop:dictionary}
There is a bijection
\[\begin{aligned}
\varphi\colon\Spec A &\xrightarrow{\sim}
\coprod_{\mathfrak p\in\Spec R}\Irr(A_{\mathfrak p})\\
P &\longmapsto (P\cap R,S(P)).
\end{aligned}\]
If $\varphi(P)=(\mathfrak p,S)$, then the following statements hold.
\begin{enumerate}
   \item $I_P\cong I_{A_{\mathfrak p}}(S)$ as an $A_{\mathfrak p}$-module;
   \item $\fdim_A I_P=\fdim_{A_{\mathfrak p}}I_{A_{\mathfrak p}}(S)$;
   \item $\occ_A I_P=\grade_{A_{\mathfrak p}}S$.
\end{enumerate}
\end{proposition}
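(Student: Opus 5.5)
The plan is to establish the bijection first and then derive (1)--(3) from localization properties of injectives and of minimal injective coresolutions.

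\emph{The bijection.} Prime ideals of $A$ lying over $\mathfrak p$ correspond to primes of $A_{\mathfrak p}$ lying over $\mathfrak pR_{\mathfrak p}$. Since $A_{\mathfrak p}$ is module-finite over the local ring $R_{\mathfrak p}$, these are exactly the maximal ideals of $A_{\mathfrak p}$, by lying-over and incomparability for module-finite extensions. The maximal ideals of the semilocal ring $A_{\mathfrak p}$ are in turn in bijection with $\Irr(A_{\mathfrak p})$ via $PA_{\mathfrak p}\mapsto S(P)$, as recalled before the statement. Taking the disjoint union over $\mathfrak p$ gives the bijection $\varphi$.

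\emph{Statement (1).} By Lemma~\ref{lem:plocal}, $I_P$ is naturally an $A_{\mathfrak p}$-module, and $I_P\cong (I_P)_{\mathfrak p}$. Since $A_{\mathfrak p}$ is flat over $A$, localization of an injective $A$-module over the Noetherian ring $A$ is injective over $A_{\mathfrak p}$, and an $A_{\mathfrak p}$-module is injective over $A_{\mathfrak p}$ iff it is injective over $A$. Hence $I_P$ is an indecomposable injective $A_{\mathfrak p}$-module; it remains indecomposable because $\Hom_A$ and $\Hom_{A_{\mathfrak p}}$ agree on $\mathfrak p$-local modules. We must identify which one it is. The injective envelope of $A/P$ is a sum of copies of $I_P$, and localizing shows that $E_{A_{\mathfrak p}}(A_{\mathfrak p}/PA_{\mathfrak p})$ is a sum of copies of $I_P$. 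Since $A_{\mathfrak p}/PA_{\mathfrak p}$ is semisimple, a direct sum of copies of $S(P)$, this envelope is a sum of copies of $I_{A_{\mathfrak p}}(S(P))$. Therefore $I_P\cong I_{A_{\mathfrak p}}(S)$.

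\emph{Statement (2).} For a $\mathfrak p$-local module $M$ we have $\Tor^A_i(X,M)\cong \Tor^{A_{\mathfrak p}}_i(X_{\mathfrak p},M)$ for right $A$-modules $X$, by flat base change using $A_{\mathfrak p}\otimes_A M\cong M$. Also, every right $A_{\mathfrak p}$-module is of the form $X_{\mathfrak p}$. This gives $\fdim_A I_P=\fdim_{A_{\mathfrak p}}I_P$, and (1) finishes the argument.

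\emph{Statement (3).} This is the main step. Localizing a minimal injective coresolution of $A$ at $\mathfrak p$ gives an injective coresolution of $A_{\mathfrak p}$, and I expect it to be minimal. Essential extensions are preserved by localization: by Lemma~\ref{lem:plocal} the summands $I_Q$ with $Q\cap R\not\subseteq\mathfrak p$ are killed, and those with $Q\cap R\subseteq \mathfrak p$ survive unchanged. Moreover, a summand $I_Q$ with $Q\cap R\subsetneq\mathfrak p$ does not occur in the coresolution of $A_{\mathfrak p}$ in a way that could matter for $I_P$.

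Granting this, $I_P$ is a summand of $I^i(A)$ iff $I_{A_{\mathfrak p}}(S)$ is a summand of $I^i(A_{\mathfrak p})$. Hence $\occ_A I_P=\occ_{A_{\mathfrak p}}I_{A_{\mathfrak p}}(S)$, and Lemma~\ref{lem:grade-fd} applied to the ring $A_{\mathfrak p}$ turns the right-hand side into $\grade_{A_{\mathfrak p}}S$.

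The delicate point is the minimality of the localized coresolution. I would justify it by noting that the multiplicity of $I_P$ in $I^i(A)$ equals the Bass-number-type quantity $\Hom_{A_{\mathfrak p}}(S,I^i(A)_{\mathfrak p})$ up to a division-ring length, which is the content of \cite{Kanda}. Alternatively, one can use directly that $(-)_{\mathfrak p}$ preserves essential monomorphisms between $A$-modules, since the kernel of $I\to I_{\mathfrak p}$ is $\mathfrak p$-torsion.
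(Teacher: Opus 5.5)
Your arguments for the bijection, (1) and (2) are fine. They are more self-contained than the paper's, which cites \cite[Theorem~7.6]{Kanda} for the bijection and the proof of \cite[Proposition~7.8]{Kanda} for (1). Your Tor base-change argument for (2) is equivalent to the paper's resolution argument.

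For (3) you take a genuinely different route. The paper quotes \cite[Theorem~7.10]{Kanda}: the multiplicity of $I_P$ in $I^i(A)$ is nonzero iff $\Ext^i_{A_{\mathfrak p}}(S,A_{\mathfrak p})\neq 0$. It then takes the least such $i$. You instead want $I^\bullet(A)_{\mathfrak p}$ to be a minimal injective coresolution of $A_{\mathfrak p}$, so that $\occ_A I_P=\occ_{A_{\mathfrak p}}I_{A_{\mathfrak p}}(S)$, and then you apply Lemma~\ref{lem:grade-fd} to the two-sided Noetherian ring $A_{\mathfrak p}$. That reduction is sound and avoids Kanda's Bass-number formula. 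However, the minimality step carries all the content, and what you wrote does not establish it.

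First, Lemma~\ref{lem:plocal} only shows that $I_Q$ is unchanged by localization when $Q\cap R\subseteq\mathfrak p$. It does not give $(I_Q)_{\mathfrak p}=0$ when $Q\cap R\not\subseteq\mathfrak p$. For that you need $I_Q$ to be $(Q\cap R)$-torsion. You can get this from \cite[Lemma~4.6]{KN}, as the paper does in Section~4, or from an Artin--Rees argument using that $I_Q$ is essential over a submodule killed by $Q$.

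Second, knowing that the kernel of $I\to I_{\mathfrak p}$ is torsion does not by itself make localization preserve essential monomorphisms. (That kernel is the $(R\setminus\mathfrak p)$-torsion, not the $\mathfrak p$-torsion.) What works is the splitting the dichotomy provides. Write $I^i(A)=E'\oplus E''$, with $E'$ the sum of the $I_Q$ having $Q\cap R\subseteq\mathfrak p$ and $E''$ the $(R\setminus\mathfrak p)$-torsion part. Then $I^i(A)\to I^i(A)_{\mathfrak p}$ is the projection onto $E'$. For $0\neq x\in E'$, essentiality of the cosyzygy $K^i\subseteq I^i(A)$ gives $a\in A$ with $0\neq ax\in K^i\cap E'$. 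Since $E'$ has no $(R\setminus\mathfrak p)$-torsion, $ax$ survives in $K^i_{\mathfrak p}$, so $K^i_{\mathfrak p}\subseteq I^i(A)_{\mathfrak p}$ is essential.

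Your alternative justification does not help either. The length of $\Hom_{A_{\mathfrak p}}(S,I^i(A)_{\mathfrak p})$ over $\operatorname{End}(S)$ is read off directly from the decomposition of $I^i(A)_{\mathfrak p}$ and needs no minimality, so it cannot prove minimality. To reach $\Ext$ you would need the $\Ext$ form of \cite[Theorem~7.10]{Kanda}, which is exactly the paper's one-line proof.

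Supply the torsion fact and the splitting argument, and use Krull--Schmidt--Azumaya to read off ``direct summand iff nonzero multiplicity''. With those, your proof of (3) is complete.
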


\begin{proof}
The bijection follows from \cite[Theorem 7.6]{Kanda}.
By the proof of \cite[Proposition 7.8]{Kanda}, $I_P$, regarded as an
$A_{\mathfrak p}$-module, is isomorphic to the injective envelope of $S$.

If $M$ is a $\mathfrak p$-local $A$-module, then
$\fdim_A M=\fdim_{A_{\mathfrak p}}M$.
Localization gives 
\[\fdim_{A_{\mathfrak p}}M\leq\fdim_A M,\] 
while restriction of an $A_{\mathfrak p}$-flat resolution gives the reverse inequality, 
since $A_{\mathfrak p}$ is flat over $A$. By Lemma~\ref{lem:plocal}, $I_P$ is $\mathfrak p$-local, 
so applying this observation to $M=I_P$ yields the flat-dimension equality.

Finally, by \cite[Theorem 7.10]{Kanda}, the multiplicity of $I_P$ in $I^i(A)$
is nonzero if and only if $\Ext^i_{A_{\mathfrak p}}(S,A_{\mathfrak p})$
is nonzero.
Taking the first such $i$ gives the occurrence and grade equality.
\end{proof}

\begin{theorem}\label{thm:pointwise}
Let $A$ be a module-finite algebra over a commutative Noetherian ring $R$.
For $n\geq 0$, the following are equivalent.
\begin{enumerate}
\item $A$ is $n$-Gorenstein.
\item For every $\mathfrak p\in\Spec R$ and every simple $A_{\mathfrak p}$-module $S$,
\[
\grade_{A_{\mathfrak p}}S
\geq
\min\{n,\fdim_{A_{\mathfrak p}}I_{A_{\mathfrak p}}(S)\}.
\]
\item For every such pair $(\mathfrak p,S)$ with $\grade_{A_{\mathfrak p}}S<n$,
\[
\fdim_{A_{\mathfrak p}}I_{A_{\mathfrak p}}(S)
=
\grade_{A_{\mathfrak p}}S.
\]
\end{enumerate}
\end{theorem}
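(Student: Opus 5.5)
The plan is to translate Lemma~\ref{lem:gor-n-local} through the dictionary of Proposition~\ref{prop:dictionary}. Since $\varphi$ is a bijection, every indecomposable injective $A$-module is $I_P$ for a unique $P\in\Spec A$, and $P$ corresponds to a unique pair $(\mathfrak p,S)$. For this pair, parts (2) and (3) of Proposition~\ref{prop:dictionary} give
\[
\occ_A I_P=\grade_{A_{\mathfrak p}}S
\quad\text{and}\quad
\fdim_A I_P=\fdim_{A_{\mathfrak p}}I_{A_{\mathfrak p}}(S).
\]
Conversely, every pair $(\mathfrak p,S)$ with $S$ a simple $A_{\mathfrak p}$-module comes from some $P$. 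So the inequality $\occ_AI\geq\min\{n,\fdim_AI\}$ holding for all indecomposable injectives $I$ is literally equivalent to the inequality in (2) holding for all pairs. Lemma~\ref{lem:gor-n-local} then gives (1)$\Leftrightarrow$(2).

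For (2)$\Leftrightarrow$(3), I will work with a fixed pair $(\mathfrak p,S)$, writing $g=\grade_{A_{\mathfrak p}}S$ and $f=\fdim_{A_{\mathfrak p}}I_{A_{\mathfrak p}}(S)$. The key extra input is $g\leq f$. This is Lemma~\ref{lem:grade-fd} applied to the two-sided Noetherian ring $A_{\mathfrak p}$, which is Noetherian as a localization of a module-finite algebra. Alternatively, it follows from Lemma~\ref{lem:grade-fd} over $A$ combined with the dictionary.

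Assume (2) and suppose $g<n$. Then $g\geq\min\{n,f\}$. The minimum cannot be $n$, since that would force $g\geq n$; hence $g\geq f$, and with $g\leq f$ we get $g=f$.

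Conversely, assume (3). If $g\geq n$, then $g\geq\min\{n,f\}$ trivially. If $g<n$, then $g=f\geq\min\{n,f\}$. So (2) holds.

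The main point needing care is that $\varphi$ is surjective onto all pairs, so that quantifying over indecomposable injectives and over pairs $(\mathfrak p,S)$ really coincide; this is supplied by Proposition~\ref{prop:dictionary}. The only other step needing justification is the inequality $g\leq f$. Everything else is formal manipulation of the minimum.
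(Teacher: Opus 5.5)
Your proof is correct and is the same argument as the paper's: (1)$\iff$(2) comes from transporting Lemma~\ref{lem:gor-n-local} through the bijection $\varphi$ of Proposition~\ref{prop:dictionary}, and (2)$\iff$(3) comes from the inequality $\grade_{A_{\mathfrak p}}S\leq\fdim_{A_{\mathfrak p}}I_{A_{\mathfrak p}}(S)$, which is Lemma~\ref{lem:grade-fd} applied over the Noetherian ring $A_{\mathfrak p}$. Drop your suggested alternative (Lemma~\ref{lem:grade-fd} over $A$ combined with the dictionary): $I_P$ is the injective envelope of a simple $A$-module only when $P$ is maximal, so that route only handles maximal $\mathfrak p$.
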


\begin{proof}
The equivalence (1)$\iff$(2) follows from Lemma~\ref{lem:gor-n-local} 
and Proposition~\ref{prop:dictionary}.
The equivalence (2)$\iff$(3) follows from Lemma~\ref{lem:grade-fd}.
\end{proof}

\begin{corollary}\label{cor:grade=fd}
The following are equivalent.
\begin{enumerate}
\item $A$ is $n$-Gorenstein for every $n\geq 0$;
\item For every $\mathfrak p\in\Spec R$ and every simple
$A_{\mathfrak p}$-module $S$, one has
\[
\fdim_{A_{\mathfrak p}}I_{A_{\mathfrak p}}(S)
=
\grade_{A_{\mathfrak p}}S.
\]
\end{enumerate}
\end{corollary}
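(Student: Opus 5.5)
The plan is to derive Corollary~\ref{cor:grade=fd} directly from Theorem~\ref{thm:pointwise} by letting $n$ range over all nonnegative integers. For each fixed $n$, the theorem says that $A$ is $n$-Gorenstein exactly when every pair $(\mathfrak p,S)$ with $\grade_{A_{\mathfrak p}}S<n$ satisfies
\[
\fdim_{A_{\mathfrak p}}I_{A_{\mathfrak p}}(S)=\grade_{A_{\mathfrak p}}S.
\]
We then quantify over $n$.

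For (1)$\Rightarrow$(2), fix a pair $(\mathfrak p,S)$.

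First suppose $\grade_{A_{\mathfrak p}}S<\infty$. Choose any $n>\grade_{A_{\mathfrak p}}S$. Since $A$ is $n$-Gorenstein, condition (3) of Theorem~\ref{thm:pointwise} gives the required equality.

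Now suppose $\grade_{A_{\mathfrak p}}S=\infty$. Lemma~\ref{lem:grade-fd}, applied to the two-sided Noetherian ring $A_{\mathfrak p}$, gives
\[
\grade_{A_{\mathfrak p}}S\leq\fdim_{A_{\mathfrak p}}I_{A_{\mathfrak p}}(S).
\]
Hence the flat dimension is also $\infty$, and equality holds in this case too. Alternatively, one can use Proposition~\ref{prop:dictionary} to transport the inequality $\occ_AI_P\le\fdim_AI_P$ from Lemma~\ref{lem:grade-fd} over $A$.

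For (2)$\Rightarrow$(1), fix $n\ge0$. Condition (2) holds for all pairs, so in particular it holds for those with grade $<n$. This is condition (3) of Theorem~\ref{thm:pointwise}, so $A$ is $n$-Gorenstein.

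The only point needing care is the infinite-grade case in (1)$\Rightarrow$(2). Theorem~\ref{thm:pointwise} never directly constrains pairs of infinite grade, so that case is handled by the inequality of Lemma~\ref{lem:grade-fd}. That lemma applies because $A_{\mathfrak p}$ is again two-sided Noetherian. Everything else is formal bookkeeping with the quantifier over $n$.
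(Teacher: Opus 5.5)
Your proposal is correct and follows the paper's argument: apply condition (3) of Theorem~\ref{thm:pointwise} for each $n$, and handle the infinite-grade case with the inequality of Lemma~\ref{lem:grade-fd} over the Noetherian ring $A_{\mathfrak p}$. Your direction-by-direction bookkeeping spells out what the paper compresses into a single sentence.
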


\begin{proof}
This follows from Theorem~\ref{thm:pointwise} by applying condition~(3) for
all $n$, together with Lemma~\ref{lem:grade-fd} when the grade is infinite.
\end{proof}

\begin{corollary}\label{cor:localglobal}
For $n\geq 0$, the following are equivalent.
\begin{enumerate}
\item $A$ is $n$-Gorenstein;
\item $A_{\mathfrak p}$ is $n$-Gorenstein for every $\mathfrak p\in\Spec R$;
\item $A_{\mathfrak m}$ is $n$-Gorenstein for every $\mathfrak m\in\Max R$.
\end{enumerate}
\end{corollary}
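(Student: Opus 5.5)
The strategy is to apply Theorem~\ref{thm:pointwise} twice: once to $A$ over $R$, and once to each localization $A_{\mathfrak q}$ regarded as a module-finite algebra over the commutative Noetherian ring $R_{\mathfrak q}$. Condition~(2) of that theorem is a family of inequalities indexed by pairs $(\mathfrak p,S)$ with $\mathfrak p\in\Spec R$ and $S$ a simple $A_{\mathfrak p}$-module. The task is therefore to compare the index sets and the quantities involved for $A$ and for $A_{\mathfrak q}$.

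For $\mathfrak q\in\Spec R$, the primes of $R_{\mathfrak q}$ are the ideals $\mathfrak pR_{\mathfrak q}$ with $\mathfrak p\subseteq\mathfrak q$. For such $\mathfrak p$ there is a canonical ring isomorphism $(A_{\mathfrak q})_{\mathfrak pR_{\mathfrak q}}\cong A_{\mathfrak p}$, since $R\setminus\mathfrak q\subseteq R\setminus\mathfrak p$. Under this identification the simple modules, the injective envelopes $I_{A_{\mathfrak p}}(S)$, the grades $\grade_{A_{\mathfrak p}}S$ and the flat dimensions $\fdim_{A_{\mathfrak p}}I_{A_{\mathfrak p}}(S)$ are literally the same objects and numbers. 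Hence condition~(2) of Theorem~\ref{thm:pointwise} for $A_{\mathfrak q}$ over $R_{\mathfrak q}$ is exactly condition~(2) for $A$, restricted to primes $\mathfrak p\subseteq\mathfrak q$.

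The three implications then follow.
\begin{enumerate}
\item (1)$\Rightarrow$(2): condition~(2) for $A$ holds at all $\mathfrak p$, so it holds at those $\mathfrak p\subseteq\mathfrak q$. Thus $A_{\mathfrak q}$ is $n$-Gorenstein for every $\mathfrak q$.
\item (2)$\Rightarrow$(3): this is trivial, since maximal ideals are primes.
\item (3)$\Rightarrow$(1): every $\mathfrak p\in\Spec R$ is contained in some maximal ideal $\mathfrak m$. The inequality for a pair $(\mathfrak p,S)$ is therefore part of condition~(2) for $A_{\mathfrak m}$, which holds by hypothesis. So condition~(2) holds for $A$, and $A$ is $n$-Gorenstein.
\end{enumerate}

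The only real work is the identification step. One must check that $A_{\mathfrak q}$ is a module-finite algebra over the Noetherian ring $R_{\mathfrak q}$, which is immediate. One must also check that iterated localization $(A_{\mathfrak q})_{\mathfrak pR_{\mathfrak q}}$ is canonically $A_{\mathfrak p}$. This holds because both are the localization of $A$ at the multiplicative set $R\setminus\mathfrak p$: the elements of $R\setminus\mathfrak q$ are already inverted in $A_{\mathfrak q}$. I expect no obstacle beyond writing these identifications carefully.
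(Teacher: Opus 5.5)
Your proposal is correct and follows essentially the same route as the paper: both apply condition~(2) of Theorem~\ref{thm:pointwise} to $A$ and to each $A_{\mathfrak q}$ over $R_{\mathfrak q}$, using the identification $(A_{\mathfrak q})_{\mathfrak pR_{\mathfrak q}}\cong A_{\mathfrak p}$ for $\mathfrak p\subseteq\mathfrak q$. The only difference is in (3)$\Rightarrow$(1): the paper first deduces that each $A_{\mathfrak p}$ is $n$-Gorenstein and then invokes the theorem, whereas you read the inequality for $(\mathfrak p,S)$ directly off condition~(2) for $A_{\mathfrak m}$.
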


\begin{proof}
(1)$\implies$(2) 
Fix $\mathfrak p\in\Spec R$. 
The primes of $R_{\mathfrak p}$ are $\mathfrak qR_{\mathfrak p}$ with $\mathfrak q\subseteq\mathfrak p$. 
Since $(A_{\mathfrak p})_{\mathfrak qR_{\mathfrak p}}$ and $A_{\mathfrak q}$ 
are isomorphic, the inequalities for $A_{\mathfrak p}$ are among those for $A$. 

(2)$\implies$(3) 
This is immediate.

(3)$\implies$(1)
Given $\mathfrak p\in\Spec R$, 
choose $\mathfrak m\in\Max R$ with $\mathfrak p\subseteq\mathfrak m$. 
The algebra $A_{\mathfrak m}$ is $n$-Gorenstein. 
Since $(A_{\mathfrak m})_{\mathfrak pR_{\mathfrak m}}$ and $A_{\mathfrak p}$ 
are isomorphic, $A_{\mathfrak p}$  is $n$-Gorenstein. 
Then $A_{\mathfrak p}$ is $n$-Gorenstein for every $\mathfrak p\in\Spec R$. 
By Theorem~\ref{thm:pointwise}, 
the required inequality holds for every pair $(\mathfrak p,S)$. 
Therefore, $A$ is $n$-Gorenstein.
\end{proof}

The local criterion has a simple application to Frobenius algebras over
commutative Gorenstein rings. Here a commutative Noetherian ring $R$ is called
\emph{Gorenstein} if $R_{\mathfrak p}$ is a Gorenstein local ring for every
$\mathfrak p\in\Spec R$. A \emph{Frobenius $R$-algebra} means a finite
projective $R$-algebra $A$ for which
\[
             {}_A A_R\cong {}_A\Hom_R(A,R)_R
\]
as $A$-$R$-bimodules, where the $A$-action on $\Hom_R(A,R)$ is given by
\[(a\cdot f)(b)=f(ba).\]

\begin{proposition}\label{prop:frobenius-gorenstein-base}
Let $R$ be a commutative Noetherian Gorenstein ring and let $A$ be a
Frobenius $R$-algebra. For every $\mathfrak p\in\Spec R$ and every simple
$A_{\mathfrak p}$-module $S$, one has
\[
 \fdim_{A_{\mathfrak p}}I_{A_{\mathfrak p}}(S)
 =\grade_{A_{\mathfrak p}}S
 =\operatorname{ht}\mathfrak p<\infty.
\]
In particular, $A$ satisfies the Auslander condition.
\end{proposition}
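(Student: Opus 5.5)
Fix $\mathfrak p\in\Spec R$ and a simple $A_{\mathfrak p}$-module $S$, and set $d=\operatorname{ht}\mathfrak p=\dim R_{\mathfrak p}$. The first reduction is to the local situation. The localization $A_{\mathfrak p}$ is a Frobenius $R_{\mathfrak p}$-algebra: it is finite projective, hence free, over the local ring $R_{\mathfrak p}$, and $\Hom_R(A,R)_{\mathfrak p}\cong\Hom_{R_{\mathfrak p}}(A_{\mathfrak p},R_{\mathfrak p})$. Moreover $R_{\mathfrak p}$ is Gorenstein local of dimension $d$. So we may assume $R$ is local Gorenstein with maximal ideal $\mathfrak m=\mathfrak p$, and that $A$ is free of finite rank over $R$ with ${}_AA\cong\Hom_R(A,R)$.

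\textbf{Grade.} Let $E$ be an injective $R$-module. The Frobenius isomorphism gives an adjunction
\[
\Hom_A(M,\Hom_R(A,E))\cong\Hom_R(M,E).
\]
Since $A$ is $R$-projective, $\Hom_R(A,-)$ sends a minimal injective resolution $0\to R\to E^0\to\cdots\to E^d\to0$ of $R$ to an injective resolution of $\Hom_R(A,R)\cong A$ by $A$-modules. The adjunction then yields, for every finitely generated $A$-module $M$,
\[
\Ext^i_A(M,A)\cong\Ext^i_R(M,R).
\]
Take $M=S$. Then $S$ is a simple $A$-module annihilated by $\mathfrak mA$, so it has finite length over $R$. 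By local duality over the Gorenstein ring $R$, $\Ext^i_R(S,R)$ vanishes for $i\neq d$ and is nonzero for $i=d$. Hence $\grade_AS=d$. In particular $\idim_AA\le d$ on both sides, the right side following from the symmetric bimodule isomorphism.

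\textbf{Flat dimension.} By Lemma~\ref{lem:grade-fd}, $\fdim_A I_A(S)\ge d$, so only the upper bound is needed. I would identify $I_A(S)$ as a summand of $\Hom_R(A,E_R(k))$, where $k=R/\mathfrak m$. This module is injective, is an essential extension of its socle $\Hom_R(A/\mathfrak mA,E(k))$, and every simple occurs in that socle. Then $\Hom_R(A,E(k))\cong A\otimes_RE(k)$, because $A$ is free of finite rank and $\Hom_R(A,R)\cong A$, so $\Hom_R(A,E)\cong\Hom_R(A,R)\otimes_RE$. Since $A\otimes_R-$ carries $R$-flat resolutions to $A$-flat resolutions,
\[
\fdim_A(A\otimes_RE(k))\le\fdim_RE(k).
\]
For a Gorenstein local ring, $\fdim_RE(k)=d$: the Bass resolution has injective modules of finite flat dimension, and $\fdim E(k)\le\dim R$ by the Gruson--Raynaud/Jensen bound, with equality because $\Tor^R_d(k,E(k))\cong k\neq0$. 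Passing to the summand gives $\fdim_AI_A(S)\le d$.

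\textbf{Conclusion and main obstacle.} The equality $\fdim=\grade=d$ holds at every pair $(\mathfrak p,S)$, so Corollary~\ref{cor:grade=fd} gives the Auslander condition. The main obstacle is this last step, the upper bound on the flat dimension, in particular the identification of the injective envelope inside $A\otimes_RE(k)$ and the citation of $\fdim_RE(k)=\dim R$ for Gorenstein $R$. If the identification is troublesome, I would instead argue by the equality of flat dimensions for $\mathfrak p$-local modules as in Proposition~\ref{prop:dictionary}.
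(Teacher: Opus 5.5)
Your proposal is correct and is essentially the paper's proof. The Frobenius isomorphism plus Hom--tensor adjunction identifies $\Ext^i_{A_{\mathfrak p}}(S,A_{\mathfrak p})$ with $\Ext^i_{R_{\mathfrak p}}(S,R_{\mathfrak p})$; you get this from an injective resolution of $R_{\mathfrak p}$, the paper from an $R_{\mathfrak p}$-projective resolution of $S$. Then $I_{A_{\mathfrak p}}(S)$ is a direct summand of $\Hom_{R_{\mathfrak p}}(A_{\mathfrak p},E)\cong A_{\mathfrak p}\otimes_{R_{\mathfrak p}}E$, where $E=I_{R_{\mathfrak p}}(k(\mathfrak p))$.

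Two small corrections. First, the socle of $\Hom_R(A,E(k))$ is $\Hom_R(A/\rad A,E(k))$, not $\Hom_R(A/\mathfrak mA,E(k))$. You do not need the socle, though: your own adjunction gives $\Hom_A(S,\Hom_R(A,E(k)))\cong\Hom_R(S,E(k))\neq0$ directly, which is how the paper embeds $S$. Second, the paper gets the bound in one line from Lemma~\ref{lem:grade-fd} and Iwanaga's formula, $d=\grade_Rk\le\fdim_RE(k)\le\idim_RR=d$. This also supplies the finiteness of $\fdim_RE(k)$, which you only assert via the Bass resolution before invoking Gruson--Raynaud/Jensen.
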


\begin{proof}
Fix $\mathfrak p\in\Spec R$, put $h=\operatorname{ht}\mathfrak p$, and let
$S$ be a simple $A_{\mathfrak p}$-module. The Frobenius structure localizes,
so
\[
 \Hom_{R_{\mathfrak p}}(A_{\mathfrak p},R_{\mathfrak p})
 \cong A_{\mathfrak p}
\]
as $A_{\mathfrak p}$-modules. Since $A_{\mathfrak p}$ is projective over
$R_{\mathfrak p}$, an $A_{\mathfrak p}$-projective resolution is also
$R_{\mathfrak p}$-projective, and adjunction gives
\[
 \Ext^i_{A_{\mathfrak p}}(S,A_{\mathfrak p})
 \cong
 \Ext^i_{R_{\mathfrak p}}(S,R_{\mathfrak p})
\]
for $i\geq 0$.
Since $S$ is cyclic over the finite $R_{\mathfrak p}$-algebra
$A_{\mathfrak p}$, it is finitely generated over $R_{\mathfrak p}$. The
maximal ideal $\mathfrak pR_{\mathfrak p}$ annihilates $S$: indeed,
$\mathfrak pR_{\mathfrak p}S$ is an $A_{\mathfrak p}$-submodule, and
simplicity together with Nakayama's lemma excludes
$\mathfrak pR_{\mathfrak p}S=S$. Then $S$ has finite length over
$R_{\mathfrak p}$. Since the Gorenstein local ring $R_{\mathfrak p}$ is
Cohen--Macaulay of dimension $h$, every nonzero finite-length
$R_{\mathfrak p}$-module has grade $h$. Hence,
\[
                 \grade_{A_{\mathfrak p}}S=h.
\]

Let $E=I_{R_{\mathfrak p}}(k(\mathfrak p))$. Lemma~\ref{lem:grade-fd}
and Iwanaga's formula \cite[Proposition~1]{Iwanaga} give
\[
 h=\grade_{R_{\mathfrak p}}k(\mathfrak p)
 \leq \fdim_{R_{\mathfrak p}}E
 \leq \idim_{R_{\mathfrak p}}R_{\mathfrak p}=h,
\]
and hence $\fdim_{R_{\mathfrak p}}E=h$. Put
\[
 J=\Hom_{R_{\mathfrak p}}(A_{\mathfrak p},E).
\]
By adjunction, $J$ is an injective $A_{\mathfrak p}$-module. Since
$A_{\mathfrak p}$ is finite projective over $R_{\mathfrak p}$, the Frobenius
isomorphism gives
\[
 J\cong
 \Hom_{R_{\mathfrak p}}(A_{\mathfrak p},R_{\mathfrak p})
       \otimes_{R_{\mathfrak p}}E
 \cong A_{\mathfrak p}\otimes_{R_{\mathfrak p}}E,
\]
so $\fdim_{A_{\mathfrak p}}J\leq h$. Moreover,
\[
 \Hom_{A_{\mathfrak p}}(S,J)
 \cong \Hom_{R_{\mathfrak p}}(S,E)\neq0.
\]
Then $S$ embeds in $J$. Extend this embedding along the essential extension
$S\subseteq I_{A_{\mathfrak p}}(S)$. The resulting map
$I_{A_{\mathfrak p}}(S)\to J$ is injective, since its kernel has zero
intersection with $S$, and it splits because $I_{A_{\mathfrak p}}(S)$ is
injective. Hence, $I_{A_{\mathfrak p}}(S)$ is a direct summand of $J$, and
therefore
\[
 \fdim_{A_{\mathfrak p}}I_{A_{\mathfrak p}}(S)\leq h.
\]
Lemma~\ref{lem:grade-fd} gives the reverse inequality, and consequently
\[
 \fdim_{A_{\mathfrak p}}I_{A_{\mathfrak p}}(S)
 =\grade_{A_{\mathfrak p}}S=h.
\]
Corollary~\ref{cor:grade=fd} now shows that $A$ satisfies the Auslander
condition.
\end{proof}

\begin{remark}
Ascent of the Auslander--Gorenstein property along Frobenius extensions has
been studied in several settings; see, for example, \cite{HKK}. The point of
Proposition~\ref{prop:frobenius-gorenstein-base} is the finer fiberwise formula
\[
 \fdim_{A_{\mathfrak p}}I_{A_{\mathfrak p}}(S)
 =\grade_{A_{\mathfrak p}}S
 =\operatorname{ht}\mathfrak p,
\]
which follows directly from the local criterion and does not require a global
bound on the dimensions of the local Gorenstein rings $R_{\mathfrak p}$.
\end{remark}

\begin{example}
Let
$A=k_{\mathbf q}[x_1,\ldots,x_m]$ be a quantum affine space with
$q_{ij}^{\ell}=1$ for all $i,j$. Launois and Topley
show that $A$ is a free Frobenius algebra over the central polynomial
subalgebra
\[
             R=k[x_1^\ell,\ldots,x_m^\ell]
\]
\cite[Proposition~3.7]{LT}. Since $R$ is Gorenstein,
Proposition~\ref{prop:frobenius-gorenstein-base} applies. 
Then
\[
 \fdim_{A_{\mathfrak p}}I_{A_{\mathfrak p}}(S)
 =\grade_{A_{\mathfrak p}}S
 =\operatorname{ht}\mathfrak p<\infty
\]
for every $\mathfrak p\in\Spec R$ and every
simple $A_{\mathfrak p}$-module $S$.
\end{example}

\section{Symmetry of self-injective dimensions}

For finite-dimensional algebras, the passage from the Auslander condition
together with finite self-injective dimension on one side to the
Iwanaga--Gorenstein property goes back to Auslander--Reiten; it is also used
as a basic input in the recent characterization of Auslander--Gorenstein
algebras in \cite[Theorem 1.5]{KKMM}.  For an algebra module-finite over a commutative Noetherian ring, the injective
modules involved need not be finitely generated, so the finite-dimensional
Auslander--Reiten bijection is not available in this form.  The substitute
used here is local: Iwanaga's formula converts a bound on self-injective
dimension into flat-dimension bounds for injectives, while Iyama's grade
duality transfers the resulting grade bounds between the simple modules of
$A_{\mathfrak p}$ and $A_{\mathfrak p}^{\mathrm{op}}$.  The finiteness of
the simple spectra of these semilocal algebras is what makes the transfer
uniform.

For a ring $A$, we denote by $\Inj A$ the category of all injective $A$-modules.
We first recall Iwanaga's formula; see \cite[Proposition 1]{Iwanaga}.

\begin{lemma}\label{lem:iwanaga}
Let $A$ be a two-sided Noetherian ring. Then
\[
\idim_{A}A
=
\sup\{\fdim_{A^{\mathrm{op}}} J\mid J\in\Inj A^{\mathrm{op}}\}.
\]
\end{lemma}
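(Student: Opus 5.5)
The plan is to compare the two sides through a Tor--Ext duality for injective right modules, and then test both dimensions on finitely generated modules. Throughout, $A$-modules are left modules and $J$ ranges over $\Inj A^{\mathrm{op}}$, i.e.\ injective right $A$-modules. It suffices to show, for each $n\geq 0$, that
\[
\idim_A A\leq n \iff \fdim_{A^{\mathrm{op}}}J\leq n \text{ for every } J\in\Inj A^{\mathrm{op}}.
\]

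\emph{Step 1 (duality isomorphism).} Let $M$ be a finitely generated left $A$-module. Since $A$ is left Noetherian, we may take a resolution $P_\bullet\to M$ by finitely generated projective left modules. For a finitely generated projective $P$ and any right module $J$, the natural map
\[
J\otimes_A P\lto \Hom_{A^{\mathrm{op}}}(\Hom_A(P,A),J),\qquad x\otimes p\mapsto (f\mapsto x f(p)),
\]
is an isomorphism. One checks this for $P=A$ and then passes to direct summands of finite free modules. Applying it termwise gives an isomorphism of complexes
\[
J\otimes_A P_\bullet\cong\Hom_{A^{\mathrm{op}}}(\Hom_A(P_\bullet,A),J).
\]
When $J$ is injective, $\Hom_{A^{\mathrm{op}}}(-,J)$ is exact and commutes with taking homology. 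Hence
\[
\Tor_i^A(J,M)\cong\Hom_{A^{\mathrm{op}}}(\Ext_A^i(M,A),J)\quad\text{for all } i\geq 0,
\]
where $\Ext^i_A(M,A)$ carries its right module structure coming from $A$.

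\emph{Step 2 (reduction to finitely generated test modules).} On the flat side, $\fdim_{A^{\mathrm{op}}}J\leq n$ holds if and only if $\Tor^A_{n+1}(J,A/L)=0$ for every left ideal $L$. This follows from dimension shifting and the standard fact that a module $X$ is flat exactly when $\Tor_1(X,A/L)=0$ for all left ideals $L$, i.e.\ when $X\otimes L\to X$ is injective for all $L$. On the injective side, Baer's criterion and dimension shifting give $\idim_A A\leq n$ if and only if $\Ext^{n+1}_A(A/L,A)=0$ for every left ideal $L$.

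\emph{Step 3 (combine).} Suppose $\idim_AA\leq n$. Then $\Ext^{n+1}_A(A/L,A)=0$ for all $L$, so Step 1 gives $\Tor^A_{n+1}(J,A/L)=0$ for all $J$ and $L$. By Step 2, $\fdim J\leq n$ for every injective right module $J$. Conversely, suppose all such $J$ have flat dimension at most $n$. Take $J$ to be an injective cogenerator of right $A$-modules, for instance $\Hom_{\mathbb Z}(A,\mathbb Q/\mathbb Z)$. Then $\Hom_{A^{\mathrm{op}}}(X,J)=0$ forces $X=0$, so Step 1 yields $\Ext^{n+1}_A(A/L,A)=0$ for every $L$, and hence $\idim_AA\leq n$ by Step 2. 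Taking infima over $n$ gives the stated equality, including the case where both sides are infinite.

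The only delicate point is Step 1. There one must keep track of the sides: $\Hom_A(P,A)$ is a finitely generated projective right module, and the isomorphism must be natural in $P$ so that it assembles into an isomorphism of complexes. Noetherianity on the left is used to obtain the finitely generated projective resolution. The rest is standard dimension shifting.
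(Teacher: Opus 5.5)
Your proof is correct. The paper does not prove this lemma; it quotes it from Iwanaga \cite[Proposition~1]{Iwanaga}. So your argument is a self-contained replacement for a citation rather than a variant of an argument in the paper.

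Your route is the standard one:
\begin{itemize}
\item the evaluation isomorphism $J\otimes_A P\cong\Hom_{A^{\mathrm{op}}}(\Hom_A(P,A),J)$ for finitely generated projective $P$;
\item hence the duality $\Tor_i^A(J,M)\cong\Hom_{A^{\mathrm{op}}}(\Ext_A^i(M,A),J)$ for finitely generated $M$ and injective $J$;
\item Baer's criterion and the flatness test, applied to the cyclic modules $A/L$;
\item an injective cogenerator for the converse direction. The injective envelope of $\Ext_A^{n+1}(A/L,A)$ would serve equally well.
\end{itemize}
The sides all check out. The map $x\otimes p\mapsto(f\mapsto xf(p))$ is well defined and right $A$-linear, and it is natural in $P$, so it assembles into an isomorphism of complexes. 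Exactness of $\Hom_{A^{\mathrm{op}}}(-,J)$ then lets it commute with homology.

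Compared with the citation, your version has three advantages:
\begin{itemize}
\item It keeps the side conventions explicit. These matter when the lemma is applied to both $A$ and $A^{\mathrm{op}}$ in Theorem~\ref{thm:main}.
\item It shows that only left Noetherianity of $A$ is used. Both tests involve only the finitely generated left modules $A/L$, which need resolutions by finitely generated projectives.
\item Your Step~1 is the companion of the evaluation isomorphism $P_j^*\otimes_A X\cong\Hom_A(P_j,X)$ that the paper itself uses in Lemma~\ref{lem:tor-grade}, so it fits the paper's own methods.
\end{itemize}
The citation's advantage is simply brevity.
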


For a ring $A$ and an integer $n>0$, 
denote by $\Irr_{<n} A$ the isomorphism classes of
simple $A$-modules of grade less than $n$.
We next recall explicitly the simple-module duality that will be used
in the counting argument.

\begin{lemma}\label{lem:iyama}
Let $A$ be a module-finite algebra over a commutative Noetherian ring $R$. 
Assume that $A$ is $n$-Gorenstein.
For every $\mathfrak{p}\in\Spec R$, the assignment
\[
S\longmapsto
\soc_{A_{\mathfrak p}^{\mathrm{op}}}
\Ext_{A_{\mathfrak p}}^{\grade_{A_{\mathfrak p}}S}
(S,A_{\mathfrak p})
\]
is a grade-preserving bijection from $\Irr_{<n}(A_{\mathfrak p})$
to $\Irr_{<n}(A_{\mathfrak p}^{\mathrm{op}})$. 
\end{lemma}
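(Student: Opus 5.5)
The plan is to reduce to Iyama's grade duality for the semilocal ring $A_{\mathfrak p}$. First, $A_{\mathfrak p}$ is $n$-Gorenstein: by Corollary~\ref{cor:localglobal}, (1)$\Rightarrow$(2). By Lemma~\ref{lem:gor-n-symmetry}, $A_{\mathfrak p}^{\mathrm{op}}$ is also $n$-Gorenstein. The ring $A_{\mathfrak p}$ is two-sided Noetherian and module-finite over the local ring $R_{\mathfrak p}$, so it is semilocal with finitely many simples on each side. I would then invoke Iyama's theorem on $n$-Gorenstein rings \cite{Iyama}. For a simple $S$ with $g=\grade S<n$, the module $\Ext^{g}_{A_{\mathfrak p}}(S,A_{\mathfrak p})$ is a finitely generated right module of grade $g$, and its socle is simple of grade $g$. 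The assignment $S\mapsto \soc\Ext^{g}(S,A_{\mathfrak p})$ gives a bijection between simples of grade $g$ on the two sides, with inverse given by the symmetric construction on $A_{\mathfrak p}^{\mathrm{op}}$. Taking the union over $0\le g<n$ yields the grade-preserving bijection $\Irr_{<n}(A_{\mathfrak p})\to\Irr_{<n}(A_{\mathfrak p}^{\mathrm{op}})$.

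If Iyama's statement is formulated only for Noetherian algebras over complete local rings, or for Artin-type settings, I would reprove the key steps directly. There are three of them.

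\emph{Step 1.} Show that $E=\Ext^{g}(S,A_{\mathfrak p})$ has an essential simple socle of grade $g$. The $n$-Gorenstein condition (Auslander's condition up to degree $n$) gives $\grade_{A_{\mathfrak p}^{\mathrm{op}}}X\ge g$ for every submodule $X\subseteq E$. Nonvanishing then comes from the double-Ext spectral sequence: $\Ext^{g}(E,A_{\mathfrak p})$ maps onto a nonzero piece related to $S$.

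\emph{Step 2.} Show the socle is simple. Use Proposition~\ref{prop:dictionary} and Lemma~\ref{lem:grade-fd}: a simple $T$ of grade $g$ corresponds to $I(T)$ occurring first in degree $g$ of the minimal injective coresolution. Apply $\Hom(S,-)$ to the minimal injective coresolution of $A_{\mathfrak p}$. This gives $\Ext^{g}(S,A_{\mathfrak p})\subseteq\Hom(S,I^{g})$, a module over $\operatorname{End}(S)$ that is simple on the right, since $\Hom(S,I(S))\cong \operatorname{End}(S)$ up to multiplicity. Alternatively, use the $\fdim I^{g}\le g$ bound and $\Tor_g(-,I)$ to identify the socle.

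\emph{Step 3.} Check injectivity and surjectivity. Here finiteness of $\Irr(A_{\mathfrak p})$ is used: a well-defined grade-preserving map in each direction, with the composite the identity, suffices.

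I expect Step 2 to be the main obstacle: proving that $\soc\Ext^{g}(S,A_{\mathfrak p})$ is simple and that the reverse assignment recovers $S$. This is exactly the content of Iyama's argument, so the intended route is to cite it and verify only that its hypotheses are satisfied by $A_{\mathfrak p}$.
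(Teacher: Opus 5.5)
Your primary route is exactly the paper's: Corollary~\ref{cor:localglobal} shows that $A_{\mathfrak p}$ is $n$-Gorenstein, and the bijection is then \cite[Theorem~1.3]{Iyama} applied to $A_{\mathfrak p}$. The fallback sketch is not needed, and you should not rely on its Step~2 as written: the right module $\Ext^{g}_{A_{\mathfrak p}}(S,A_{\mathfrak p})$ need not be simple, and only its socle is (already for the path algebra of $1\to 2$ over a field, $\Hom_A(S,A)$ is two-dimensional when $S$ is the simple projective).
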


\begin{proof}
By Corollary \ref{cor:localglobal}, 
the algebra $A_{\mathfrak p}$ is also $n$-Gorenstein.
Then the conclusion follows from \cite[Theorem 1.3]{Iyama}.
\end{proof}

We have the following.

\begin{corollary}\label{cor:counting}
Let $(R,\mathfrak m,k)$ be a commutative Noetherian local ring and 
let $A$ be a module-finite $R$-algebra. 
If $A$ is $n$-Gorenstein, the following are equivalent.
\begin{enumerate}
\item $\grade_A S< n$ for every simple $A$-module $S$;
\item $\grade_{A^{\mathrm{op}}}T<n$ for every simple $A^{\mathrm{op}}$-module $T$.
\end{enumerate}
\end{corollary}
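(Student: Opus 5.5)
We plan to derive the corollary from Lemma~\ref{lem:iyama} applied at the closed point $\mathfrak p=\mathfrak m$, combined with a finite counting argument. Since $R$ is local, $R\setminus\mathfrak m$ consists of units, so $A_{\mathfrak m}=A$ and $A_{\mathfrak m}^{\mathrm{op}}=A^{\mathrm{op}}$. Moreover, $A$ is semilocal: the maximal ideal $\mathfrak m$ annihilates every simple $A$-module, by the Nakayama argument used in the proof of Proposition~\ref{prop:frobenius-gorenstein-base}. Hence the simple $A$-modules are exactly the simple modules over the finite-dimensional $k$-algebra $A/\mathfrak mA$. In particular $\Irr(A)$ is finite, and $|\Irr(A)|=|\Irr(A^{\mathrm{op}})|$, because $A/\mathfrak mA$ and its opposite have the same number of simple modules: both equal the number of simple Artinian factors of $A/\rad A$.

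Lemma~\ref{lem:iyama} with $\mathfrak p=\mathfrak m$ gives a bijection $\Irr_{<n}(A)\to\Irr_{<n}(A^{\mathrm{op}})$, so $|\Irr_{<n}(A)|=|\Irr_{<n}(A^{\mathrm{op}})|$. Condition~(1) says $\Irr_{<n}(A)=\Irr(A)$, and condition~(2) says $\Irr_{<n}(A^{\mathrm{op}})=\Irr(A^{\mathrm{op}})$. Assuming (1), we get
\[
|\Irr_{<n}(A^{\mathrm{op}})|=|\Irr_{<n}(A)|=|\Irr(A)|=|\Irr(A^{\mathrm{op}})|.
\]
Since all these sets are finite, $\Irr_{<n}(A^{\mathrm{op}})=\Irr(A^{\mathrm{op}})$, which is (2). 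The converse is symmetric. We may run the same count with the inverse bijection. Alternatively, by Lemma~\ref{lem:gor-n-symmetry}, $A^{\mathrm{op}}$ is also $n$-Gorenstein, and $A^{\mathrm{op}}$ is module-finite over $R$, so Lemma~\ref{lem:iyama} applies to it as well.

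The only point needing care is the equality $|\Irr(A)|=|\Irr(A^{\mathrm{op}})|$ for the semilocal ring $A$. To prove it, we check that $\rad A\supseteq\mathfrak mA$, so that $A/\rad A$ is a finite product of simple Artinian rings $M_{n_i}(D_i)$. The simple left modules and the simple right modules are then both indexed by these factors, since $M_{n}(D)^{\mathrm{op}}\cong M_n(D^{\mathrm{op}})$ is again simple Artinian. Everything else is routine bookkeeping with Lemma~\ref{lem:iyama}.
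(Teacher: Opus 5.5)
Your proposal is correct and follows essentially the same route as the paper. Both use Nakayama to identify the simple modules on each side with those of the finite-dimensional algebra $A/\mathfrak m A$, which gives $|\Irr(A)|=|\Irr(A^{\mathrm{op}})|<\infty$. Both then apply Lemma~\ref{lem:iyama} at $\mathfrak p=\mathfrak m$ (where $A_{\mathfrak m}=A$) and conclude by counting.
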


\begin{proof}
Let $S$ be a simple $A$-module. 
Since $S$ is cyclic over $A$ and $A$ is finite over $R$, 
the module $S$ is finitely generated over $R$. 
The submodule $\mathfrak mS$ is an $A$-submodule, 
so simplicity and Nakayama's lemma give $\mathfrak mS=0$. 
The same argument applies to simple $A^{\mathrm{op}}$-modules.

Then the simple $A$-modules are precisely the simple
$A/\mathfrak mA$-modules, and the simple $A^{\mathrm{op}}$-modules are
precisely the simple $(A/\mathfrak mA)^{\mathrm{op}}$-modules. Since
$A/\mathfrak mA$ is a finite-dimensional $k$-algebra, there are finitely
many isomorphism classes on both sides, and $A/\mathfrak mA$ and
$(A/\mathfrak mA)^{\mathrm{op}}$ have the same number of simple-module
isomorphism classes.

Since $A$ is $n$-Gorenstein,
Lemma \ref{lem:iyama} gives a bijection between 
the simple $A$-modules of grade less than $n$ and 
the simple $A^{\mathrm{op}}$-modules of grade less than $n$. 
Suppose (2) holds. 
All simple $A^{\mathrm{op}}$-modules have grade less than $n$.
Since the total numbers of simple $A$-modules 
and simple $A^{\mathrm{op}}$-modules are equal, 
the $A$-modules of grade less than $n$ exhaust all simple $A$-modules. 
This proves (1).
The converse follows by interchanging $A$ and $A^{\mathrm{op}}$.
\end{proof}

We can now state the second main theorem in the form used in this section.

\begin{theorem}\label{thm:main}
Let $A$ be a module-finite algebra over a commutative Noetherian ring $R$. 
Assume that $A$ satisfies the Auslander condition. 
Then $\idim_AA<\infty$ if and only if $\idim_{A^{\mathrm{op}}}A<\infty$. 
In this case,
\[
\idim_AA=\idim_{A^{\mathrm{op}}}A.
\]
\end{theorem}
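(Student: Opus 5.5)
By symmetry (Lemma~\ref{lem:gor-n-symmetry} shows the Auslander condition passes to $A^{\mathrm{op}}$), it suffices to prove one implication. So assume $d=\idim_AA<\infty$; we show $\idim_{A^{\mathrm{op}}}A\leq d$. The equality then follows from Zaks' lemma, or simply by running the same inequality in the other direction.

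The first step converts the bound on $\idim_AA$ into a bound on grades over the localizations. By Lemma~\ref{lem:iwanaga} applied to $A$, every injective $A^{\mathrm{op}}$-module has flat dimension at most $d$. Proposition~\ref{prop:dictionary} applies equally to the module-finite algebra $A^{\mathrm{op}}$. Hence for every $\mathfrak p\in\Spec R$ and every simple $A_{\mathfrak p}^{\mathrm{op}}$-module $T$ we get
\[
\fdim_{A^{\mathrm{op}}_{\mathfrak p}}I_{A^{\mathrm{op}}_{\mathfrak p}}(T)=\fdim_{A^{\mathrm{op}}}I_{P}\leq d
\]
for the corresponding prime $P$. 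Corollary~\ref{cor:grade=fd} holds for $A^{\mathrm{op}}$, since $A^{\mathrm{op}}$ satisfies the Auslander condition. It gives $\grade_{A^{\mathrm{op}}_{\mathfrak p}}T=\fdim I(T)\leq d<d+1$.

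The second step is the key transfer. Fix $\mathfrak p$. The algebra $A_{\mathfrak p}$ is module-finite over the local ring $R_{\mathfrak p}$, and it is $(d+1)$-Gorenstein by Corollary~\ref{cor:localglobal}. Apply Corollary~\ref{cor:counting} with $n=d+1$. Every simple $A^{\mathrm{op}}_{\mathfrak p}$-module has grade $<d+1$, so every simple $A_{\mathfrak p}$-module $S$ has $\grade_{A_{\mathfrak p}}S\leq d$. By Corollary~\ref{cor:grade=fd} for $A$, this gives $\fdim_{A_{\mathfrak p}}I_{A_{\mathfrak p}}(S)\leq d$. Proposition~\ref{prop:dictionary} then gives $\fdim_AI_P\leq d$ for every $P\in\Spec A$. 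The point here is that the bound $d$ is uniform in $\mathfrak p$: the counting argument only ever compares against the fixed threshold $d+1$.

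The third step passes from indecomposable injectives to all injectives. Since $A$ is left Noetherian, every injective $A$-module $J$ is a direct sum of indecomposable injectives, each isomorphic to some $I_P$ by Kanda's theorem. Flat dimension of a direct sum is the supremum of the flat dimensions of the summands, because Tor commutes with direct sums. Hence $\fdim_AJ\leq d$ for all $J\in\Inj A$. Lemma~\ref{lem:iwanaga} applied to $A^{\mathrm{op}}$, whose opposite is $A$, then yields $\idim_{A^{\mathrm{op}}}A\leq d$.

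Interchanging the roles of $A$ and $A^{\mathrm{op}}$ gives the reverse inequality, and hence the equality. The main obstacle is the second step: obtaining a uniform bound across all primes. This is exactly what the per-$\mathfrak p$ finite counting in Corollary~\ref{cor:counting} supplies, and the remaining steps are bookkeeping through the dictionary of Proposition~\ref{prop:dictionary}.
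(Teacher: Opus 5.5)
Your proposal is correct and follows the paper's proof essentially step for step. Iwanaga's formula together with Corollary~\ref{cor:grade=fd} bounds the grades of simple $A_{\mathfrak p}^{\mathrm{op}}$-modules by $d$, Corollary~\ref{cor:counting} with $n=d+1$ transfers this bound to the simple $A_{\mathfrak p}$-modules, and the dictionary, the decomposition of injectives into indecomposables, and Iwanaga's formula for $A^{\mathrm{op}}$ finish the argument.

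The differences are cosmetic. You invoke Proposition~\ref{prop:dictionary} for $A^{\mathrm{op}}$ instead of checking directly that the $A_{\mathfrak p}^{\mathrm{op}}$-injective envelope is $A^{\mathrm{op}}$-injective. You also obtain the equality from the two symmetric inequalities $\idim_{A^{\mathrm{op}}}A\leq\idim_AA$ and $\idim_AA\leq\idim_{A^{\mathrm{op}}}A$ rather than citing Zaks; this is valid and avoids the external citation.
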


\begin{proof}
Assume that $d=\idim_A A$ is finite.
Fix $\mathfrak p\in\Spec R$ and 
let $T$ be a simple $A_{\mathfrak p}^{\mathrm{op}}$-module. 
Let $J$ be its injective envelope.
Restriction of scalars gives
\[
 \Hom_{A^{\mathrm{op}}}(-,J)
 \cong
 \Hom_{A_{\mathfrak p}^{\mathrm{op}}}((-)_{\mathfrak p},J),
\]
and localization is exact. Then $J$ is injective as an
$A^{\mathrm{op}}$-module. Lemma \ref{lem:iwanaga} gives
\[\fdim_{A^{\mathrm{op}}}J\leq d.\]
Since $J$ is $\mathfrak p$-local, 
the proof of Proposition \ref{prop:dictionary} gives
\[
\fdim_{A_{\mathfrak p}^{\mathrm{op}}}J
=
\fdim_{A^{\mathrm{op}}}J
\leq d.
\]
By Corollary~\ref{cor:localglobal}, $A_{\mathfrak p}$ satisfies the
Auslander condition, and Lemma~\ref{lem:gor-n-symmetry} shows that
$A_{\mathfrak p}^{\mathrm{op}}$ does as well. Corollary~\ref{cor:grade=fd},
applied to $A_{\mathfrak p}^{\mathrm{op}}$, gives
\[
\grade_{A_{\mathfrak p}^{\mathrm{op}}}T
=
\fdim_{A_{\mathfrak p}^{\mathrm{op}}}J
\leq d.
\]
Then every simple $A_{\mathfrak p}^{\mathrm{op}}$-module has grade less than $d+1$. Since $A_{\mathfrak p}$ is $(d+1)$-Gorenstein, Corollary~\ref{cor:counting}, applied with $n=d+1$, shows that every simple $A_{\mathfrak p}$-module has grade at most $d$.

Now let $I_P$ be an arbitrary indecomposable injective $A$-module 
and $\varphi(P)=(\mathfrak p,S)$ under Proposition \ref{prop:dictionary}. Then
\[
\fdim_A I_P
=
\fdim_{A_{\mathfrak p}}I_{A_{\mathfrak p}}(S)
=
\grade_{A_{\mathfrak p}}S
\leq d,
\]
where the second equality is Corollary \ref{cor:grade=fd}.

Since $A$ is module-finite over the Noetherian ring $R$, every injective $A$-module is a direct sum of indecomposable injective modules \cite{FW}. 
Direct sums of modules of flat dimension at most $d$ again have flat dimension at most $d$. 
Then every injective $A$-module $I$ satisfies $\fdim_A I\leq d$. 
Applying Lemma \ref{lem:iwanaga} to $A^{\mathrm{op}}$ gives
\[
\idim_{A^{\mathrm{op}}}A\leq d<\infty.
\]
Then $\idim_{A^{\mathrm{op}}}A$ is finite. 
The converse implication follows by applying the same argument to $A^{\mathrm{op}}$. 
Once both self-injective dimensions are finite, by \cite[Lemma~A]{Zaks} 
$\idim_AA$ and $\idim_{A^{\mathrm{op}}}A$ are equal.
\end{proof}

\begin{corollary}
Let $A$ be a module-finite algebra over a commutative Noetherian ring. 
The following are equivalent.
\begin{enumerate}
    \item $A$ is Auslander--Gorenstein;
    \item $d=\idim_AA<\infty$ and $A$ is $d$-Gorenstein;
    \item $d=\idim_{A^{\mathrm{op}}}A<\infty$ and $A$ is $d$-Gorenstein.
\end{enumerate}
\end{corollary}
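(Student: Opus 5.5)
The plan is to prove the cycle (1)$\Rightarrow$(2)$\Rightarrow$(1) and (1)$\Rightarrow$(3)$\Rightarrow$(1). The key reduction is that, once one self-injective dimension $d$ is finite, the $d$-Gorenstein condition already forces the full Auslander condition. After that, Theorem~\ref{thm:main} gives finiteness on the other side.

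\textbf{(1)$\Rightarrow$(2) and (1)$\Rightarrow$(3).} If $A$ is Auslander--Gorenstein, then both self-injective dimensions are finite by definition. By Theorem~\ref{thm:main} they are equal to a common value $d$. Since $A$ is $n$-Gorenstein for every $n$, it is in particular $d$-Gorenstein.

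\textbf{(2)$\Rightarrow$(1).} Let $d=\idim_AA<\infty$ and suppose $A$ is $d$-Gorenstein. I will show that $A$ is $n$-Gorenstein for every $n$, i.e.\ $\fdim_AI^\ell(A)\leq\ell$ for all $\ell$.
\begin{itemize}
\item For $\ell<d$ this is the hypothesis.
\item For $\ell>d$ we have $I^\ell(A)=0$, since the minimal injective coresolution of ${}_AA$ has length $d$.
\item Only $\ell=d$ remains, where we need $\fdim_AI^d(A)\leq d$. Lemma~\ref{lem:iwanaga} applied to $A^{\mathrm{op}}$ bounds flat dimensions of injective left $A$-modules by $\idim_{A^{\mathrm{op}}}A$. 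But that quantity is not yet known to be finite, so this route is circular.
\end{itemize}
Instead I will use Lemma~\ref{lem:gor-n-local} with $n=d+1$. We must check $\occ_AI\geq\min\{d+1,\fdim_AI\}$ for every indecomposable injective $I$ occurring in the coresolution.
\begin{itemize}
\item If $\occ_AI<d$, the $d$-Gorenstein condition gives $\occ_AI\geq\min\{d,\fdim_AI\}$. Since $\occ_AI<d$, this forces $\fdim_AI\leq\occ_AI$, which is what is needed.
\item If $\occ_AI\geq d+1$, the inequality is automatic. In fact $\occ_AI\leq d$ whenever $I$ occurs at all; if $I$ never occurs then $\occ_AI=\infty$.
\item The main obstacle is the case $\occ_AI=d$ exactly. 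Here we need $\fdim_AI\leq d$, equivalently $\fdim_AI^d(A)\leq d$ summandwise.
\end{itemize}

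For this case I will use the standard fact that the last term of a minimal injective coresolution of length $d$ has flat dimension at most $d$ when $A$ is $d$-Gorenstein. One proof goes through Lemma~\ref{lem:gor-n-symmetry} and dimension shifting. For every finitely generated right module $M$, take the $d$-th syzygy $\Omega^dM$. The $d$-Gorenstein condition, in the formulation of \cite{FGR}, gives $\Tor_{i}^A(M,I^\ell(A))=0$ for $i>\ell$ and $\ell<d$. The exact sequence
\[
0\lto A\lto I^0(A)\lto\cdots\lto I^d(A)\lto 0
\]
then transfers vanishing of $\Tor^A_{i}(M,A)=0$ for $i>0$ to $\Tor^A_{i}(M,I^d(A))=0$ for $i>d$. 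The transfer works by splitting the sequence into short exact pieces and chasing Tor degrees: each cosyzygy shifts the degree up by one, and the bounds $\fdim I^\ell\leq\ell$ absorb the shift. Since vanishing of Tor against all finitely generated right modules detects flat dimension over a Noetherian ring, we get $\fdim_AI^d(A)\leq d$.

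Hence $A$ is $(d+1)$-Gorenstein, and since $I^\ell(A)=0$ for $\ell>d$, it is $n$-Gorenstein for all $n$. So $A$ satisfies the Auslander condition. Theorem~\ref{thm:main} then gives $\idim_{A^{\mathrm{op}}}A<\infty$, so $A$ is Auslander--Gorenstein.

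\textbf{(3)$\Rightarrow$(1).} By Lemma~\ref{lem:gor-n-symmetry}, $A^{\mathrm{op}}$ is $d$-Gorenstein with $\idim_{A^{\mathrm{op}}}A=d$. Applying (2)$\Rightarrow$(1) to $A^{\mathrm{op}}$ shows that $A^{\mathrm{op}}$ is Auslander--Gorenstein. Using Lemma~\ref{lem:gor-n-symmetry} once more, so is $A$.
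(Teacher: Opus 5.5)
Your proof is correct and follows essentially the paper's route. Your Tor chase along the short exact pieces $0\to K^\ell\to I^\ell(A)\to K^{\ell+1}\to 0$ is exactly the paper's induction $\fdim_A K^i\le i$ giving $\fdim_A I^d(A)\le d$, after which Theorem~\ref{thm:main} and Lemma~\ref{lem:gor-n-symmetry} finish (2)$\Rightarrow$(1) and (3)$\Rightarrow$(1) as in the paper; the detour through Lemma~\ref{lem:gor-n-local}, the unused syzygy $\Omega^dM$, and the mention of Lemma~\ref{lem:gor-n-symmetry} inside the Tor chase are superfluous and can be dropped.
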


\begin{proof}
(1)$\implies$(2) This is immediate. 

(2)$\implies$(1)  
If $d=0$, then $A$ is self-injective and hence satisfies the Auslander condition. Let $d>0$ and consider the finite injective coresolution
\[
0\longrightarrow A\longrightarrow I^0(A)\longrightarrow\cdots\longrightarrow I^d(A)\longrightarrow0.
\]
Since $A$ is $d$-Gorenstein, 
$\fdim_A I^i(A)\leq i$ for $i<d$. 
For $0\leq i\leq d-1$, let $K^i$ be the kernel of $I^{i}(A)\to I^{i+1}(A)$. 
From the short exact sequences
\[
0\to K^{i}\to I^i(A)\to K^{i+1}\to0
\]
an induction gives $\fdim_A K^i\leq i$. The final exact sequence
\[
0\to K^{d-1}\to I^{d-1}(A)\to I^d(A)\to0
\]
then gives $\fdim_A I^d(A)\leq d$. 
Then every term of the minimal injective coresolution has flat dimension 
at most its degree, so $A$ satisfies the Auslander condition. 
Theorem~\ref{thm:main} then gives $\idim_{A^{\mathrm{op}}}A<\infty$, 
and hence $A$ is Auslander--Gorenstein. 

(3)$\iff$(1) This follows by applying the same argument to $A^{\mathrm{op}}$ 
and  using Lemma~\ref{lem:gor-n-symmetry}.
\end{proof}

\section{Tor detection and fiberwise grade bijection}
Kl\'asz et al. show that, for a finite-dimensional Iwanaga--Gorenstein algebra, 
the Auslander--Gorenstein
property can be characterized by a bijective grade map on simple modules
\cite[Theorem 2.1]{KKMM}.  A key homological step in their proof relates the
top projective degree of an injective module to the grade of a simple module
through finite-dimensional duality \cite[Lemma 2.2]{KKMM}.  

The purpose of this section is to isolate an analog suited to the 
present module-finite setting. 
Rather than dualizing over a field, we pair an injective $A$-module with
$A^{\mathrm{op}}$-modules by Tor. 
For injective envelopes over a central
localization, which are torsion over the corresponding base local  ring, the
top Tor degree is detected by the semisimple quotient.  This leads naturally
to a fiberwise form of Iyama's grade bijection on simple modules.  Under
finite localized self-injective dimension, the existence of this bijection
is equivalent to the Auslander condition.

\begin{lemma}\label{lem:tor-grade}
Let $A$ be a two-sided Noetherian ring, 
let $M$ be a finitely generated $A$-module of finite grade $g$, 
and let $T$ be a submodule of $\Ext_A^g(M,A)$. 
Let $I$ be an injective $A$-module with finite flat dimension $d$. 
If
\[
             \Tor_d^A(T,I)\neq0,
\]
then $d=g$ and $\Hom_A(M,I)\neq0$.
\end{lemma}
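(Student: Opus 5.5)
The plan is to compute $\Tor^A_\ast(-,I)$ on a right module built from a projective resolution of $M$. Tensoring that resolution with $I$ recovers $\Hom_A(-,I)$, and the injectivity of $I$ kills almost all cohomology. Left exactness of $\Tor_d^A(-,I)$ then passes nonvanishing from $T$ to that module.

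Since $\fdim_A I=d$, we have $\Tor^A_{d+1}(-,I)=0$. So the long exact sequence shows that $\Tor^A_d(-,I)$ is left exact on right $A$-modules. Hence any right module $C$ containing $T$ has $\Tor^A_d(C,I)\neq 0$.

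Choose a resolution $P_\bullet\to M$ by finitely generated projective $A$-modules. Put $Q^i=\Hom_A(P_i,A)$; these are finitely generated projective right modules and $H^i(Q^\bullet)=\Ext^i_A(M,A)$. Because each $P_i$ is finitely generated projective, $Q^i\otimes_A I\cong\Hom_A(P_i,I)$ naturally. Therefore
\[
H^i(Q^\bullet\otimes_A I)\cong\Ext^i_A(M,I)=
\begin{cases}\Hom_A(M,I), & i=0,\\ 0, & i>0,\end{cases}
\]
using that $I$ is injective.

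The case $d=0$ is handled directly. Here $I$ is flat, so $H^g(Q^\bullet\otimes_A I)\cong \Ext^g_A(M,A)\otimes_A I$, and $T\otimes_A I$ embeds into it. If $g>0$ this module is $\Ext^g_A(M,I)=0$, which contradicts $\Tor_0^A(T,I)\neq0$. So $g=0$, and then $0\neq T\otimes_A I\subseteq \Hom_A(M,A)\otimes_A I\cong\Hom_A(M,I)$.

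Now assume $d\geq1$. Since $\Ext^i_A(M,A)=0$ for $i<g$, the sequence
\[
0\to Q^0\to Q^1\to\cdots\to Q^g\to C\to0,\qquad C=\operatorname{coker}(Q^{g-1}\to Q^g),
\]
is exact, so it is a projective resolution of $C$ of length $g$. Moreover $\Ext^g_A(M,A)=\ker(Q^g\to Q^{g+1})/\operatorname{im}(Q^{g-1}\to Q^g)$ is a submodule of $C$, hence so is $T$. By the left exactness above, $\Tor^A_d(C,I)\neq0$.

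To identify these Tor groups, compute $\Tor^A_j(C,I)$ by tensoring the displayed resolution with $I$. For $1\leq j$, this identifies $\Tor^A_j(C,I)$ with the cohomology of $Q^\bullet\otimes_A I$ at $Q^{g-j}\otimes_A I$, truncated at degree $0$. This gives:
\begin{itemize}
\item $\Tor^A_j(C,I)\cong\Ext^{g-j}_A(M,I)=0$ for $1\leq j<g$;
\item $\Tor^A_g(C,I)\cong\ker(Q^0\otimes_A I\to Q^1\otimes_A I)=\Hom_A(M,I)$;
\item $\Tor^A_j(C,I)=0$ for $j>g$.
\end{itemize}
Since $d\geq1$ and $\Tor^A_d(C,I)\neq0$, the only possibility is $d=g$, and then $\Hom_A(M,I)\cong\Tor^A_g(C,I)\neq0$.

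The main point to check carefully is the index bookkeeping in this identification, especially the degree-$0$ end where the truncation produces $\Hom_A(M,I)$. The separate treatment of $d=0$ is needed because $\Tor_0$ is not detected by this vanishing pattern.
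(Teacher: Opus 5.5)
Your proof is correct and follows essentially the same route as the paper. It uses the same cokernel $C$ of $\Hom_A(P_{g-1},A)\to\Hom_A(P_g,A)$ with its length-$g$ projective resolution, the same reversal $\Tor_j^A(C,I)\cong\Ext_A^{g-j}(M,I)$ for $j\geq 1$, left exactness of $\Tor_d^A(-,I)$ to pass from $T$ to $C$ when $d\geq 1$, and the same separate flat argument $\Ext_A^g(M,A)\otimes_A I\cong\Ext_A^g(M,I)$ when $d=0$. Your bullet for $\Tor_g^A(C,I)$ tacitly needs $g\geq 1$, but this is harmless: if $g=0$ then $C$ is projective, so that case falls under your $j>g$ bullet.
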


\begin{proof}
Choose a projective resolution $P$ of $M$ by finitely generated projective modules 
and put $P_i^*=\Hom_A(P_i,A)$. 
Since $g$ is the first nonzero cohomological degree of $P^*$, 
the cokernel $C$ of $P_{g-1}^*\to P_g^*$ has a projective resolution
\[
0\lto P_0^*\lto P_1^*\lto\cdots
\lto P_g^*\lto C\lto0
\]
and $\Ext_A^g(M,A)\subseteq C$.
Then $\operatorname{pdim}_{A^{\mathrm{op}}}C\leq g$. 
The evaluation isomorphisms 
\[P_j^*\otimes_A X\cong\Hom_A(P_j,X)\] 
give a natural isomorphism
\begin{equation}\label{eq:tor-ext-reversal-short}
 \Tor_i^A(C,X)\cong\Ext_A^{g-i}(M,X)
\end{equation}
for every $A$-module $X$ and every $i\geq 1$. 
If $X$ is flat, they also give
\begin{equation}\label{eq:flat-evaluation-short}
             \Ext_A^g(M,A)\otimes_A X\cong\Ext_A^g(M,X).
\end{equation}

Since $T$ is a submodule of $C$ and $\fdim_A I=d$, 
the non-vanishing of $\Tor_d^A(T,I)$ implies the non-vanishing of $\Tor_d^A(C,I)$ 
when $d>0$. 
Then $d\leq g$, and \eqref{eq:tor-ext-reversal-short} yields
\[
0\neq\Tor_d^A(C,I)\cong\Ext_A^{g-d}(M,I).
\]
Since $I$ is injective, $g-d=0$, and hence $\Hom_A(M,I)\neq0$.

If $d=0$, then $I$ is flat. 
The inclusion $T\subseteq\Ext_A^g(M,A)$ remains injective after tensoring with $I$, 
so \eqref{eq:flat-evaluation-short} yields
\[
0\neq T\otimes_A I\subseteq \Ext_A^g(M,A)\otimes_A I
   \cong\Ext_A^g(M,I).
\]
Injectivity of $I$ forces $g=0$, and again $\Hom_A(M,I)\neq0$.
\end{proof}

The following elementary lemma reduces Tor detection to simple modules.

\begin{lemma}\label{lem:top-tor-central}
Let $(R,\mathfrak m,k)$ be a commutative Noetherian local ring and 
let $A$ be a module-finite $R$-algebra. 
Let $I$ be an $\mathfrak m$-torsion $A$-module and 
let $M$ be a finitely generated $A^{\mathrm{op}}$-module. If
\[
\Tor_n^A(M,I)\neq0,
\]
then there are a simple $A^{\mathrm{op}}$-module $T$ and an integer $m\geq n$ 
such that
\[
                         \Tor_m^A(T,I)\neq0.
\]
\end{lemma}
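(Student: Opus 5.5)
Fix $n$ with $\Tor_n^A(M,I)\neq0$ and set $m_0=\sup\{m\geq n \mid \Tor_m^A(N,I)\neq0 \text{ for some finitely generated } A^{\mathrm{op}}\text{-module } N\}$. The plan is to bound this supremum, reduce to the top degree, and then show that at the top degree some simple module already has nonvanishing Tor. The tool for passing from $N$ to simples is dévissage along short exact sequences, which is controlled by the long exact Tor sequence.

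\textbf{Step 1: finiteness of $m_0$.} This is not automatic, so a different reduction is needed when $m_0$ is infinite. First observe that $\Tor_j^A(N,I)$ is $\mathfrak m$-torsion for every finitely generated $N$. Indeed, write $I=\varinjlim_t (0:_I\mathfrak m^t)$; since Tor commutes with direct limits, each $\Tor_j^A(N,(0:_I\mathfrak m^t))$ is killed by $\mathfrak m^t$, because $R$ is central. Thus we may instead argue by a descending reduction on the support. Replace $N$ by $N/\mathfrak m^t N$ and compare using a filtration $0\to \mathfrak m^tN\to N\to N/\mathfrak m^tN\to 0$. This is awkward, so I prefer the following route.

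\textbf{Step 1$'$: reduction to finite length in the same degree or higher.} Take a class $x\in\Tor_n^A(M,I)$, $x\neq0$. Since $x$ comes from $\Tor_n^A(M,I_0)$ with $I_0=(0:_I\mathfrak m^t)$ for some $t$ (direct limit), and $I_0$ is an $A/\mathfrak m^tA$-module, the class is in the image of $\Tor^A_n(M,I_0)\to\Tor_n^A(M,I)$. Rather than fight this, use the sequence $0\to\mathfrak m^tM\to M\to M/\mathfrak m^tM\to0$. The long exact sequence
\[\Tor_n^A(\mathfrak m^tM,I)\to\Tor_n^A(M,I)\to\Tor_n^A(M/\mathfrak m^tM,I)\]
shows the following: if $\Tor_n^A(M/\mathfrak m^tM,I)\neq0$, we have passed to a finite-length module in the same degree. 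Otherwise $\Tor_n^A(\mathfrak m^tM,I)\neq0$. Here I would choose $t$ via Artin--Rees so that the image of the left map misses $x$. This is the main obstacle, and I expect it needs the torsion structure carefully: the image of $\Tor_n(\mathfrak m^tM,I)$ should be killed in the limit by an argument like the one showing $\mathfrak m$-torsion. Concretely, I would try the map $\Tor_n(\mathfrak m^tM,I)\to\Tor_n(M,I)$, which factors through multiplication-like maps $M^{\oplus r}\to M$ given by generators of $\mathfrak m^t$, and combine this with the fact that $\mathfrak m^{t'}$ kills $x$ for large $t'$.

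\textbf{Step 2: dévissage for finite length.} Once $N$ has finite length over $A$ (it is a module over the Artinian ring $A/\mathfrak m^tA$), argue by induction on length. Choose a simple submodule $T\subseteq N$, giving $0\to T\to N\to N/T\to0$. The exact sequence
\[\Tor_j^A(T,I)\to\Tor_j^A(N,I)\to\Tor_j^A(N/T,I)\]
shows that $\Tor_j^A(N,I)\neq0$ forces either $\Tor_j^A(T,I)\neq0$ or $\Tor_j^A(N/T,I)\neq0$. In the first case we are done with $m=j$. In the second case, induct on length. This yields a simple $T$ with $\Tor_m^A(T,I)\neq0$ for some $m\geq n$; in fact $m=n$ if Step~1$'$ preserves the degree. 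Allowing $m\geq n$ covers the alternative in which one shifts degree via a syzygy sequence $0\to\Omega N\to F\to N\to 0$ with $\Tor_{m}(N,I)\cong\Tor_{m-1}(\Omega N,I)$, which moves degrees upward when pushing back.
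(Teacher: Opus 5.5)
There is a genuine gap in Step~1$'$. Your d\'evissage in Step~2 is fine, but nothing in the proposal actually produces a finite-length module to which it can be applied. Step~1$'$ tries to keep the degree $n$: you want a $t$ such that your nonzero class is not in the image of $\Tor_n^A(\mathfrak m^tM,I)\to\Tor_n^A(M,I)$, so that it survives in $\Tor_n^A(M/\mathfrak m^tM,I)$. No such $t$ need exist, and in fact no degree-preserving reduction can work.

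Take $A=R=k[[u]]$, $I=E_R(k)$, $M=R$ and $n=0$. Then $\Tor_0^A(M,I)=E_R(k)\neq0$. But $E_R(k)$ is divisible, so $\Tor_0^A(M/\mathfrak m^tM,I)=E_R(k)/\mathfrak m^tE_R(k)=0$ for every $t$. Every class lies in the image of $\mathfrak m^t\otimes_R E_R(k)\to E_R(k)$, and no Artin--Rees choice of $t$ changes this. Moreover, for the only simple module $k$ one has $\Tor_0^R(k,E_R(k))=0$ and $\Tor_1^R(k,E_R(k))=(0:_{E_R(k)}u)\neq0$. So in this example the lemma forces $m>n$, contrary to your expectation that $m=n$.

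Your closing remark on syzygies does not supply the needed shift. The isomorphism $\Tor_m^A(N,I)\cong\Tor_{m-1}^A(\Omega N,I)$ only trades $N$ for $\Omega N$, which is no closer to having finite length.

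The missing idea is a one-element reduction that is allowed to raise the degree by one. It uses exactly the $\mathfrak m$-torsion observation from your Step~1.
\begin{itemize}
\item For $y\in\mathfrak m$, put $K=(0:_My)$ and $C=M/yM$, and use the exact sequences $0\to K\to M\to yM\to0$ and $0\to yM\to M\to C\to0$.
\item If $\Tor_n^A(K,I)=0=\Tor_{n+1}^A(C,I)$, then the induced maps $\Tor_n^A(M,I)\to\Tor_n^A(yM,I)\to\Tor_n^A(M,I)$ are both injective.
\item Their composite is multiplication by the central element $y$. This cannot be injective on the nonzero $\mathfrak m$-torsion module $\Tor_n^A(M,I)$.
\item Hence $\Tor_n^A(K,I)\neq0$ or $\Tor_{n+1}^A(C,I)\neq0$. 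Both $K$ and $C$ are killed by $y$ and by everything that already killed $M$.
\end{itemize}
Run this step successively through a system of parameters $x_1,\dots,x_d$ (or simply through generators of $\mathfrak m$). This produces a finitely generated module $L$ annihilated by an $\mathfrak m$-primary ideal, hence of finite length, with $\Tor_m^A(L,I)\neq0$ for some $m$ between $n$ and $n+d$. Your Step~2 then finishes the proof. This is the paper's argument. It also makes your worry about the finiteness of $m_0$ unnecessary, since each step raises the degree by at most one.
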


\begin{proof}
Let $d=\dim R$, choose a system of parameters $x_1,\ldots,x_d$ of $R$, 
and put 
\[\mathfrak q=(x_1,\ldots,x_d).\] 
Then $\mathfrak q$ is $\mathfrak m$-primary. 
A projective resolution of $M$ 
shows that every $\Tor_i^A(M,I)$ is again $\mathfrak m$-torsion.

Let $x\in\mathfrak m$ and assume $\Tor_n^A(M,I)\neq0$. 
Put $K=0:_M x$ and $C=M/xM$.
The exact sequences
\[\begin{gathered}
0\to K\to M\to xM\to0\\
0\to xM\to M\to C\to0
\end{gathered}\]
show that if $\Tor_n^A(K,I)=0=\Tor_{n+1}^A(C,I)$, then the induced maps
\[
\Tor_n^A(M,I)\longrightarrow\Tor_n^A(xM,I)
\longrightarrow\Tor_n^A(M,I)
\]
are both injective. By functoriality of Tor, 
their composite is multiplication by $x$ on $\Tor_n^A(M,I)$. 
This is impossible: 
for $0\neq z\in\Tor_n^A(M,I)$ there is an $a$ with $\mathfrak m^a z=0$, 
hence $x^a z=0$. Therefore,
\[
\Tor_n^A(K,I)\neq0
\quad\text{or}\quad
\Tor_{n+1}^A(C,I)\neq0.
\]
Both $K$ and $C$ are annihilated by $x$.

Apply this step successively to the parameters $x_1,\ldots,x_d$. 
At each stage the annihilation obtained in the preceding stages is preserved 
under choosing the relevant submodule or quotient. 
We obtain a finitely generated $A^{\mathrm{op}}$-module $L$ with $\mathfrak qL=0$ 
and $\Tor_m^A(L,I)\neq0$ for some $n\leq m\leq n+d$,
since at each of the $d$ steps the Tor degree either stays unchanged or increases by one. Since $A$ is module-finite over $R$, the module $L$ is finitely generated over $R$; and since $R/\mathfrak q$ is Artinian, $L$ has finite length. A composition series of $L$, together with the long exact Tor sequences, yields a simple factor $T$ with $\Tor_m^A(T,I)\neq0$.
\end{proof}

\begin{theorem}\label{thm:residue-tor-short}
Let $(R,\mathfrak m,k)$ be a commutative Noetherian local ring, 
let $A$ be a module-finite $R$-algebra, 
and let  $I$ be an $\mathfrak m$-torsion $A$-module. Then
\begin{equation}\label{eq:residue-tor-short}
\fdim_AI=\sup\{i\geq0\mid\Tor_i^A(A/\rad A,I)\neq0\}.
\end{equation}
\end{theorem}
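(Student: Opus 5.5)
We will prove the equality by establishing the two inequalities separately; call the right-hand side $t$. The inequality $t\leq \fdim_AI$ is immediate, since $A/\rad A$ is an $A^{\mathrm{op}}$-module and $\Tor_i^A(-,I)$ vanishes on all modules for $i>\fdim_AI$. In particular, if $\fdim_AI$ is finite then so is $t$. The content of the statement is the reverse inequality $\fdim_AI\leq t$, and this includes the case where $\fdim_AI=\infty$.

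The first reduction is the standard fact that, over the right Noetherian ring $A$, flat dimension is detected by finitely generated test modules:
\[
\fdim_AI=\sup\{i\mid \Tor_i^A(M,I)\neq0 \text{ for some finitely generated } A^{\mathrm{op}}\text{-module } M\}.
\]
We justify this as follows. Flatness of a syzygy can be tested against finitely generated (indeed cyclic) right modules. Hence $\Tor_{i}^A(-,I)$ vanishes identically as soon as it vanishes on all finitely generated $M$ for $i=j+1$, where $j$ is the relevant syzygy index; dimension shifting then identifies the supremum.

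Now let $n$ be any integer with $\Tor_n^A(M,I)\neq 0$ for some finitely generated $M$. Lemma~\ref{lem:top-tor-central} applies because $I$ is $\mathfrak m$-torsion. It produces a simple $A^{\mathrm{op}}$-module $T$ and an integer $m\geq n$ with $\Tor_m^A(T,I)\neq0$.

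Next we compare with $A/\rad A$. By the argument in the proof of Corollary~\ref{cor:counting}, every simple $A^{\mathrm{op}}$-module is annihilated by $\mathfrak m$. Since $A/\mathfrak mA$ is finite-dimensional, $\rad A\supseteq \mathfrak mA$ and $A/\rad A$ is a semisimple Artinian ring. Every simple right $A$-module $T$ is therefore a direct summand of $A/\rad A$ as a right module. Since $\Tor$ commutes with finite direct sums, $\Tor_m^A(A/\rad A,I)\neq 0$, and so $t\geq m\geq n$. Taking the supremum over $n$ gives $\fdim_AI\leq t$. If $\fdim_AI=\infty$, there are arbitrarily large such $n$, and hence $t=\infty$ as well.

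The main obstacle is the first reduction, namely that $\fdim_AI$ is computed by Tor against finitely generated right modules. This holds for any left module $I$ via Lazard/the ideal test for flatness on the syzygies of $I$. We would state it carefully: $\fdim_AI\leq j$ if and only if the $j$-th syzygy $\Omega^jI$ in a flat (or projective) resolution is flat. That holds if and only if $\Tor_1^A(A/\mathfrak a,\Omega^jI)=0$ for all right ideals $\mathfrak a$, which is equivalent to $\Tor_{j+1}^A(A/\mathfrak a,I)=0$. The remaining steps are formal consequences of Lemma~\ref{lem:top-tor-central} and semisimplicity of $A/\rad A$.
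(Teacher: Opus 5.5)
Your proposal is correct and follows essentially the same route as the paper. You reduce to finitely generated test modules, apply Lemma~\ref{lem:top-tor-central} to pass to a simple $A^{\mathrm{op}}$-module $T$ with $\Tor_m^A(T,I)\neq 0$ for some $m\geq n$, and then use that $T$ is a direct summand of the semisimple module $A/\rad A$. The only difference is in the reduction step: you justify it by the ideal test for flatness on syzygies plus dimension shifting, whereas the paper uses that $\Tor_i^A(-,I)$ commutes with filtered colimits; both justifications are valid.
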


\begin{proof}
Set
\[
s=\sup\{i\geq0\mid\Tor_i^A(A/\rad A,I)\neq0\}.
\]
Clearly $s\leq\fdim_A I$. Conversely, 
every $A^{\mathrm{op}}$-module is a filtered colimit 
of finitely generated submodules, 
and $\Tor_i^A(-,I)$ commutes with filtered colimits. 
Hence, $\fdim_A I$ is detected by finitely generated $A^{\mathrm{op}}$-modules. 
If $M$ is such a module and $\Tor_n^A(M,I)\neq0$, 
Lemma~\ref{lem:top-tor-central} gives a simple $A^{\mathrm{op}}$-module $T$ 
and an integer $m\geq n$ with $\Tor_m^A(T,I)\neq0$. 
Since $A$ is semilocal, $A/\rad A$ is semisimple Artinian and $T$ occurs as 
a direct summand. Then $\Tor_m^A(A/\rad A,I)\neq0$, so $n\leq m\leq s$. 
Taking the supremum over all such $M$ and $n$ gives $\fdim_A I\leq s$.
\end{proof}

Motivated by Lemma~\ref{lem:iyama}, say that a module-finite algebra $A$
over a commutative Noetherian ring $R$ has the 
\emph{fiberwise Iyama grade bijection} if, for every $\mathfrak p\in\Spec R$, all simple
$A_{\mathfrak p}$-modules have finite grade, and the assignment
\[
S\longmapsto
F_{\mathfrak p}(S)=\soc_{A_{\mathfrak p}^{\mathrm{op}}}
\Ext_{A_{\mathfrak p}}^{\grade_{A_{\mathfrak p}}S}
(S,A_{\mathfrak p})
\]
is a well-defined bijection from $\Irr(A_{\mathfrak p})$
to $\Irr(A_{\mathfrak p}^{\mathrm{op}})$. 

\begin{theorem}\label{thm:fiberwise-tor-short}
Let $A$ be a module-finite algebra over a commutative Noetherian ring $R$. 
Assume for $\mathfrak{p}\in \Spec R$ that 
$\idim_{A_\mathfrak p^{\mathrm{op}}}A_\mathfrak p<\infty$.
The following are equivalent.
\begin{enumerate}
   \item $A$ has the fiberwise Iyama grade bijection;
   \item $A$ satisfies the Auslander condition.
\end{enumerate}
\end{theorem}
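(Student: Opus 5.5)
For (2)$\Rightarrow$(1) the plan is to reduce to Theorem~\ref{thm:main} over each localization and then apply Iyama's duality at a large enough level. For (1)$\Rightarrow$(2) the plan is to verify the criterion of Corollary~\ref{cor:grade=fd} by Tor detection. Throughout, fix $\mathfrak p\in\Spec R$. Recall that $A_{\mathfrak p}$ is module-finite over the local ring $R_{\mathfrak p}$. By hypothesis, $e=\idim_{A_{\mathfrak p}^{\mathrm{op}}}A_{\mathfrak p}<\infty$.

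\emph{(2)$\Rightarrow$(1).} By Corollary~\ref{cor:localglobal}, $A_{\mathfrak p}$ satisfies the Auslander condition. Since $\idim_{A_{\mathfrak p}^{\mathrm{op}}}A_{\mathfrak p}$ is finite, Theorem~\ref{thm:main} applied to $A_{\mathfrak p}$ over $R_{\mathfrak p}$ gives $d=\idim_{A_{\mathfrak p}}A_{\mathfrak p}=e<\infty$.
\begin{itemize}
\item Lemma~\ref{lem:iwanaga}, applied to $A_{\mathfrak p}^{\mathrm{op}}$, bounds the flat dimension of every injective $A_{\mathfrak p}$-module by $e$. Corollary~\ref{cor:grade=fd} then gives $\grade_{A_{\mathfrak p}}S=\fdim I_{A_{\mathfrak p}}(S)\le e$ for every simple $S$. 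In particular all simple $A_{\mathfrak p}$-modules have finite grade.
\item Symmetrically, Lemma~\ref{lem:iwanaga} applied to $A_{\mathfrak p}$ bounds the flat dimension of every injective $A_{\mathfrak p}^{\mathrm{op}}$-module by $d$. Using Lemma~\ref{lem:gor-n-symmetry} and Corollary~\ref{cor:grade=fd} on the opposite side, every simple $A_{\mathfrak p}^{\mathrm{op}}$-module has grade $\le d$. (Corollary~\ref{cor:counting} would also give this.)
\item Take $n=\max(d,e)+1$. Then $\Irr_{<n}$ is all of $\Irr$ on both sides. Lemma~\ref{lem:iyama} with this $n$ shows that $F_{\mathfrak p}$ is a well-defined bijection $\Irr(A_{\mathfrak p})\to\Irr(A_{\mathfrak p}^{\mathrm{op}})$.
\end{itemize}

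\emph{(1)$\Rightarrow$(2).} By Corollary~\ref{cor:grade=fd}, it suffices to show $\fdim_{A_{\mathfrak p}}I=\grade_{A_{\mathfrak p}}S$ for each simple $A_{\mathfrak p}$-module $S$, where $I=I_{A_{\mathfrak p}}(S)$.
\begin{enumerate}
\item \emph{Finiteness.} Lemma~\ref{lem:iwanaga} for the ring $A_{\mathfrak p}^{\mathrm{op}}$ gives $d:=\fdim_{A_{\mathfrak p}}I\le e<\infty$.
\item \emph{Torsion.} I need $I$ to be $\mathfrak pR_{\mathfrak p}$-torsion. The argument is as follows. The torsion submodule $\Gamma(I)$ of elements killed by a power of $\mathfrak pR_{\mathfrak p}$ contains $S$, since $\mathfrak pR_{\mathfrak p}S=0$ as in the proof of Corollary~\ref{cor:counting}. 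For module-finite algebras over a Noetherian ring, $\Gamma$ of an injective is injective; this follows from Artin--Rees and Baer's criterion, or from Kanda's description of $I_P$ with $P$ maximal. Since $I$ is an essential, hence indecomposable, extension of $S$, we get $\Gamma(I)=I$.
\item \emph{Detecting a simple.} Theorem~\ref{thm:residue-tor-short} gives $\Tor_d^{A_{\mathfrak p}}(A_{\mathfrak p}/\rad A_{\mathfrak p},I)\neq0$. Since $A_{\mathfrak p}/\rad A_{\mathfrak p}$ is semisimple, some simple $A_{\mathfrak p}^{\mathrm{op}}$-module $T$ has $\Tor_d(T,I)\neq0$.
\item \emph{Applying surjectivity.} By surjectivity of $F_{\mathfrak p}$, $T\cong F_{\mathfrak p}(S')$ for a simple $S'$ of finite grade $g'$. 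Thus $T$ embeds in $\Ext^{g'}_{A_{\mathfrak p}}(S',A_{\mathfrak p})$.
\item \emph{Conclusion.} Lemma~\ref{lem:tor-grade} with $M=S'$ yields $d=g'$ and $\Hom(S',I)\neq0$. Since $\soc I=S$, we get $S'\cong S$, so $d=\grade_{A_{\mathfrak p}}S$.
\end{enumerate}

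I expect the main obstacle to be the torsion claim in step~2, which is needed to invoke Theorem~\ref{thm:residue-tor-short}. The rest of the argument is bookkeeping with the earlier results.
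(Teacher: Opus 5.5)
Your proposal is correct and follows essentially the same route as the paper in both directions. For (1)$\Rightarrow$(2) you use Tor detection via Theorem~\ref{thm:residue-tor-short}, surjectivity of $F_{\mathfrak p}$ and Lemma~\ref{lem:tor-grade}; for (2)$\Rightarrow$(1) you use Theorem~\ref{thm:main} and then Lemma~\ref{lem:iyama} at level $d+1$, where your $\max(d,e)+1$ is the same level since $d=e$. The only real difference is that you prove $I_{A_{\mathfrak p}}(S)$ is $\mathfrak pR_{\mathfrak p}$-torsion directly, using Artin--Rees, Baer's criterion and indecomposability, whereas the paper simply cites \cite[Lemma 4.6(2)]{KN}.
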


\begin{proof}
(1)$\implies$(2). Fix $\mathfrak p\in\Spec R$ and a simple
$A_\mathfrak p$-module $S$. Put $I=I_{A_\mathfrak p}(S)$.
By Lemma~\ref{lem:iwanaga}, $d=\fdim_{A_\mathfrak p}I$ is finite.
Moreover, $\mathfrak pR_\mathfrak pS=0$ by
Nakayama's lemma. Then $S$ is $\mathfrak pR_\mathfrak p$-torsion, and
\cite[Lemma 4.6(2)]{KN} shows
that $I$ is $\mathfrak pR_\mathfrak p$-torsion.

Theorem~\ref{thm:residue-tor-short}, applied over $R_\mathfrak p$, gives
\[
\Tor_d^{A_\mathfrak p}(A_\mathfrak p/\rad A_\mathfrak p,I)\neq0.
\]
Since $A_\mathfrak p/\rad A_\mathfrak p$ is semisimple Artinian, there is a
simple $A_\mathfrak p^{\mathrm{op}}$-module $T$ such that
\[
\Tor_d^{A_\mathfrak p}(T,I)\neq0.
\]
By the fiberwise Iyama grade bijection, there is a simple
$A_\mathfrak p$-module $S'$ of finite grade $g$ such that
\[
T\cong \soc_{A_\mathfrak p^{\mathrm{op}}}
\Ext_{A_\mathfrak p}^{g}(S',A_\mathfrak p)
\subseteq\Ext_{A_\mathfrak p}^{g}(S',A_\mathfrak p).
\]
Lemma~\ref{lem:tor-grade} gives $d=g$ and
$\Hom_{A_\mathfrak p}(S',I)\neq0$. A nonzero map from the simple module $S'$
to the injective envelope $I_{A_\mathfrak p}(S)$ is injective, and essentiality
of $S\subseteq I$ forces $S'\cong S$. Then
\[
\fdim_{A_\mathfrak p}I_{A_\mathfrak p}(S)=\grade_{A_\mathfrak p}S
\]
for every $(\mathfrak p,S)$. Corollary~\ref{cor:grade=fd} shows that $A$
satisfies the Auslander condition.

(2)$\implies $(1). Fix $\mathfrak p\in\Spec R$. By
Corollary~\ref{cor:localglobal}, 
$A_\mathfrak p$ satisfies the Auslander
condition. By assumption
$d=\idim_{A_\mathfrak p^{\mathrm{op}}}A_\mathfrak p$ is finite.
Theorem~\ref{thm:main} gives $d=\idim_{A_\mathfrak p}A_\mathfrak p$.
Since $A_\mathfrak p$ satisfies the Auslander
condition, it is $(d+1)$-Gorenstein. Corollary~\ref{cor:grade=fd} and
Lemma~\ref{lem:iwanaga} give
\[
\grade_{A_\mathfrak p}S<d+1 \text{ and } 
\grade_{A^{\mathrm{op}}_\mathfrak p}T<d+1
\]
for every simple $A_\mathfrak p$-module $S$ and 
every simple $A^{\mathrm{op}}_\mathfrak p$-module $T$. 
Then 
\[\Irr_{<{d+1}}(A_\mathfrak p)=\Irr(A_\mathfrak p) \text{ and } 
\Irr_{<{d+1}}(A^\mathrm{op}_\mathfrak p)=\Irr(A^\mathrm{op}_\mathfrak p).\]
Lemma~\ref{lem:iyama}, applied with $n=d+1$, now shows that
$F_{\mathfrak p}$ is a bijection between all simple
$A_\mathfrak p$-modules and all simple $A_\mathfrak p^{\mathrm{op}}$-modules.
Hence, $A$ has the fiberwise Iyama grade bijection.
\end{proof}

Combining this criterion with the symmetry theorem removes the need to
assume finite self-injective dimension on both sides.

\begin{corollary}\label{cor:tor-one-sided}
Let $A$ be a module-finite algebra over a commutative Noetherian ring. 
Assume that $\idim_{A^{\mathrm{op}}}A<\infty$ and 
$A$ has the fiberwise Iyama grade bijection. 
Then $A$ is Auslander--Gorenstein.
\end{corollary}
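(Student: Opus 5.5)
To prove the corollary, I will first derive the Auslander condition from Theorem~\ref{thm:fiberwise-tor-short}, (1)$\implies$(2). Then Theorem~\ref{thm:main} will give finiteness of the other self-injective dimension. The only hypothesis of Theorem~\ref{thm:fiberwise-tor-short} that needs checking is that $\idim_{A_{\mathfrak p}^{\mathrm{op}}}A_{\mathfrak p}<\infty$ for every $\mathfrak p\in\Spec R$. So the first step is to show that localization does not increase right self-injective dimension.

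Put $d=\idim_{A^{\mathrm{op}}}A$. I will argue through Lemma~\ref{lem:iwanaga}, applied to $A^{\mathrm{op}}$, which gives $d=\sup\{\fdim_A J\mid J\in\Inj A\}$. Let $J$ be any injective $A_{\mathfrak p}$-module. The restriction argument in the proof of Theorem~\ref{thm:main} shows that $J$ is injective as an $A$-module: $\Hom_A(-,J)\cong\Hom_{A_{\mathfrak p}}((-)_{\mathfrak p},J)$ and localization is exact. Hence $\fdim_A J\le d$. Since $J$ is $\mathfrak p$-local, the observation in the proof of Proposition~\ref{prop:dictionary} gives $\fdim_{A_{\mathfrak p}}J=\fdim_A J\le d$. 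Applying Lemma~\ref{lem:iwanaga} to the two-sided Noetherian ring $A_{\mathfrak p}^{\mathrm{op}}$ then yields $\idim_{A_{\mathfrak p}^{\mathrm{op}}}A_{\mathfrak p}\le d<\infty$ for every $\mathfrak p$. (Alternatively one could localize a finite injective coresolution of $A_A$, but over a noncommutative ring that requires knowing that localization preserves injectivity, so the Iwanaga route is cleaner.)

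With this hypothesis verified, Theorem~\ref{thm:fiberwise-tor-short} shows that $A$ satisfies the Auslander condition. Theorem~\ref{thm:main} then turns $\idim_{A^{\mathrm{op}}}A<\infty$ into $\idim_AA<\infty$, with the two dimensions equal. So $A$ satisfies the Auslander condition and has finite self-injective dimension on both sides, which means it is Auslander--Gorenstein.

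The only step requiring care is the localization of the self-injective dimension bound. The key points there are that an injective $A_{\mathfrak p}$-module remains injective over $A$, and that flat dimension is the same over $A$ and $A_{\mathfrak p}$ for $\mathfrak p$-local modules. Both facts were already used in the proof of Theorem~\ref{thm:main}, so I expect no real obstacle.
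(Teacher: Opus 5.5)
Your proof is correct and has the same skeleton as the paper's. You verify $\idim_{A_{\mathfrak p}^{\mathrm{op}}}A_{\mathfrak p}<\infty$ for every $\mathfrak p$, apply Theorem~\ref{thm:fiberwise-tor-short} to obtain the Auslander condition, and then use Theorem~\ref{thm:main} to get $\idim_AA=\idim_{A^{\mathrm{op}}}A<\infty$.

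The only difference is how the localization bound is justified. The paper argues from the definition:
\begin{itemize}
\item over the Noetherian ring $A^{\mathrm{op}}$, injective dimension is detected by $\Ext$ from finitely generated modules;
\item every finitely generated $A_{\mathfrak p}^{\mathrm{op}}$-module has the form $N_{\mathfrak p}$ with $N$ finitely generated over $A^{\mathrm{op}}$;
\item $\Ext$ from finitely generated modules commutes with central localization, so $\Ext^i_{A_{\mathfrak p}^{\mathrm{op}}}(N_{\mathfrak p},A_{\mathfrak p})\cong\Ext^i_{A^{\mathrm{op}}}(N,A)_{\mathfrak p}=0$ for $i>n$.
\end{itemize}
You instead apply Iwanaga's formula twice, combined with the restriction-of-injectives fact from Theorem~\ref{thm:main} and the $\mathfrak p$-local flat-dimension fact from Proposition~\ref{prop:dictionary}. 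Each step checks out, including the direction of Lemma~\ref{lem:iwanaga} applied to $A^{\mathrm{op}}$ and to $A_{\mathfrak p}^{\mathrm{op}}$.

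Your route reuses machinery already in the paper. The paper's route avoids calling on Iwanaga's formula for what is an elementary localization statement.

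One side remark: by Baer's criterion and the same $\Ext$-localization fact, central localization of a Noetherian ring preserves injective modules. So the alternative you set aside, localizing a finite injective coresolution, would also have worked.
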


\begin{proof}
Put $n=\idim_{A^{\mathrm{op}}}A<\infty$. For every
$\mathfrak p\in\Spec R$, central localization does not increase injective
dimension, so
\[
\idim_{A_{\mathfrak p}^{\mathrm{op}}}A_{\mathfrak p}\leq n.
\]
Indeed, injective dimension over the Noetherian ring $A^{\mathrm{op}}$ is
detected by finitely generated modules; every finitely generated
$A_{\mathfrak p}^{\mathrm{op}}$-module descends to a finitely generated
$A^{\mathrm{op}}$-module, and $\Ext$ commutes with central localization.
Then Theorem~\ref{thm:fiberwise-tor-short} shows that $A$ satisfies the
Auslander condition. Theorem~\ref{thm:main} then gives
\[\idim_AA=\idim_{A^{\mathrm{op}}}A<\infty,\] 
and hence $A$ is Auslander--Gorenstein.
\end{proof}

\section*{Acknowledgements}

This work is partially supported by the National Natural Science Foundation
of China (Nos.~12571036 and 11801141).

\end{document}